\title{A general approach to transversal versions of Dirac-type theorems}
\author{Pranshu Gupta\thanks{University of Passau, Faculty of Computer Science and Mathematics, Passau, Germany. \textit{E-mail:} \texttt{pranshu.gupta@uni-passau.de}. This research was conducted while PG was affiliated with the Hamburg University of Technology.}
    \and 
    Fabian Hamann\thanks{Hamburg University of Technology, Institute of Mathematics, Am Schwarzenberg-Campus 3, 21073 Hamburg, Germany.  \textit{E-mail:} \texttt{fabian.hamann@tuhh.de}.}
    \and    
    Alp M\"uyesser\thanks{Department of Mathematics, University College London, London WC1E 6BT, UK. \textit{E-mail:} \{\texttt{alp.muyesser.21|a.sgueglia}\}\texttt{@ucl.ac.uk}. This research was conducted while AS was a PhD student at the London School of Economics.}    
    \and     
    Olaf Parczyk\thanks{Department of Mathematics and Computer Science, Freie Universität Berlin, Arnimallee 3, 14195 Berlin, Germany. \textit{E-mail:} \texttt{parczyk@mi.fu-berlin.de}.}    
    \and 
    Amedeo Sgueglia\footnotemark[3]
}
\tikzset{font=\small}
\theoremstyle{plain}
\newtheorem{theorem}{Theorem}[section]
\newtheorem{lemma}[theorem]{Lemma}
\newtheorem{proposition}[theorem]{Proposition}
\newtheorem{observation}[theorem]{Observation}
\theoremstyle{definition}
\newtheorem{definition}[theorem]{Definition}
\newtheorem{remark}{Remark}
\newcommand\rbdelta{\delta_{\mathcal{F},d}^{\textrm{rb}}}
\newcommand\eps{\varepsilon}
\newcommand\N{\mathbb{N}}
\newcommand\cP{\mathcal{P}}
\newcommand\cF{\mathcal{F}}
\newcommand\cC{\mathcal{C}}
\newcommand\cH{\mathcal{H}}
\newcommand\cS{\mathcal{S}}
\newcommand\cG{\mathcal{G}}
\newcommand\fG{\textbf{\textup{G}}}
\newcommand\fH{\textbf{\textup{H}}}
\newcommand{\ab}{\textbf{Ab}}
\newcommand{\con}{\textbf{Con}}
\newcommand{\fac}{\textbf{Fac}}
\newcommand{\cA}{\mathcal{A}}
\newcommand\cK{\mathcal{K}}
\begin{document}
\maketitle
\begin{abstract} 
    Given a collection of hypergraphs $\fH=(H_1, \ldots, H_m)$ with the same vertex set, an $m$-edge graph $F\subset \cup_{i\in [m]}H_i$ is a transversal if there is a bijection $\phi:E(F)\to [m]$ such that $e\in E(H_{\phi(e)})$ for each $e\in E(F)$. How large does the minimum degree of each $H_i$ need to be so that $\fH$ necessarily contains a copy of $F$ that is a transversal? Each $H_i$ in the collection could be the same hypergraph, hence the minimum degree of each $H_i$ needs to be large enough to ensure that $F\subseteq H_i$.  Since its general introduction by Joos and Kim~[Bull. Lond. Math. Soc., 2020, 52(3): 498–504], a growing body of work has shown that in many cases this lower bound is tight. In this paper, we give a unified approach to this problem by providing a widely applicable sufficient condition for this lower bound to be asymptotically tight. This is general enough to recover many previous results in the area and obtain novel transversal variants of several classical Dirac-type results for (powers of) Hamilton cycles. For example, we derive that any collection of $rn$ graphs on an $n$-vertex set, each with minimum degree at least $(r/(r+1)+o(1))n$, contains a transversal copy of the $r$-th power of a Hamilton cycle. This can be viewed as a rainbow version of the P\'osa-Seymour conjecture. 
\end{abstract}

\section{Introduction}
This paper concerns the study of transversals over hypergraph collections. Roughly speaking, given a collection of hypergraphs on the same vertex set, we want to select exactly one edge from each hypergraph, so that the selected edges (the transversal) induce a hypergraph with some desired property. 
More precisely, we say that $\fH=(H_1,\ldots,H_m)$ is a \textit{hypergraph collection on vertex set $V$} if, for each $i\in [m]$, the hypergraph $H_i$ has vertex set $V$. We call the collection a \textit{graph collection} if each hypergraph in the collection has uniformity two. Given an $m$-edge hypergraph $F$ on $V$, we say that $\fH$ has a \emph{transversal} copy of $F$ if there is a bijection $\phi\colon E(F)\to [m]$ such that $e\in H_{\phi(e)}$ for each $e\in E(F)$.
We will also use the adjective \textit{rainbow} for a transversal copy of $F$. Indeed, we can think of the edges of hypergraph $H_i$ to be coloured with colour $i$ and, in this framework, a transversal copy of $F$ is a copy of $F$ in $\bigcup_{i \in [m]} H_i$ with edges of pairwise distinct colours. We are interested in the following general question formulated originally by Joos and Kim \cite{joos2020rainbow}.
\begin{equation*}
    \text{\parbox{.92\textwidth}{
    \textbf{Question 1. }Let $F$ be an $m$-edge hypergraph with vertex set $V$, $\cH$ be a family of hypergraphs, and $\fH=(H_1,\dots,H_m)$ be a hypergraph collection on vertex set  $V$ with $H_i \in \cH$ for each $i \in [m]$. 
    Which conditions on $\cH$ guarantee a transversal copy of $F$ in $\fH$?}}
\end{equation*}

By taking $H_1=H_2=\dots=H_m$, it is clear that such a property needs to guarantee that each hypergraph in $\cH$ contains $F$ as a subhypergraph. However, this alone is not always sufficient, not even asymptotically.
For example, Aharoni, DeVos, de la Maza, Montejano and \v{S}\'{a}mal~\cite{rainbowMantel} showed that if $\fG=(G_1,G_2,G_3)$ is a graph collection on $[n]$ with $e(G_i)>(\frac{26-2\sqrt{7}}{81})n^2$ for each $i\in [3]$, then $\fG$ contains a transversal which is a triangle. As shown in~\cite{rainbowMantel}, the constant $\frac{26-2\sqrt{7}}{81}>1/4$ is optimal. On the other hand, Mantel's theorem states that any graph with at least $n^2/4$ edges must contain a triangle.
\par Instead of a lower bound on the total number of edges, it is also natural to investigate what can be guaranteed with a lower bound on the minimum degree. It turns out that even in this more restrictive setting, there can be a discrepancy between the uncoloured and the rainbow versions of the problem. To make this more precise, we give the following two definitions, where, for a $k$-uniform hypergraph $H$ and $1 \le d < k$, we let $\delta_d(H)$ be the minimum number of edges of $H$ that any set of $d$ vertices of $V(H)$ is contained in.
Moreover, for a hypergraph collection $\fH = (H_1,\dots,H_m)$, we denote $|\fH|=m$ and $\delta_d(\fH)=\min_{i\in [m]}\delta_d(H_i)$.

\begin{definition}[Uncoloured minimum degree threshold]
\label{def:min_degree_threshold}
Let $\mathcal{F}$ be an infinite family of $k$-uniform hypergraphs. By $\delta_{\mathcal{F},d}$ we denote, if it exists, the smallest real number $\delta$ such that for all $\alpha>0$ and for all but finitely many $F\in \mathcal{F}$ the following holds.
Let $n=|V(F)|$ and $H$ be any $n$-vertex $k$-uniform hypergraph with $\delta_d(H) \ge (\delta+\alpha)n^{k-d}$.
Then $H$ contains a copy of $F$.
\end{definition}

For example, if $\mathcal{F}$ is the family of graphs consisting of a cycle on $n$ vertices for each $n\in\mathbb{N}$, then we have $\delta_{\cF,1} = 1/2$. Indeed, this follows from Dirac's theorem which states that any graph with minimum degree at least $n/2$ has a Hamilton cycle. 

\begin{definition}[Rainbow minimum degree threshold]

Let $\mathcal{F}$ be an infinite family of $k$-uniform hypergraphs. By $\rbdelta$ we denote, if it exists, the smallest real number $\delta$ such that for all $\alpha>0$ and for all but finitely many $F\in \mathcal{F}$ the following holds.
Let $n=|V(F)|$ and $\fH$ be any $k$-uniform hypergraph collection on $n$ vertices with $|\fH|=|E(F)|$ and $\delta_d(\fH) \geq (\delta+\alpha)n^{k-d}$.
Then $\fH$ contains a transversal copy of $F$.
\end{definition}

If the two values are well-defined, it must be that $\rbdelta\geq \delta_{\mathcal{F},d}$. Indeed, if $H$ contains no copy of $F$, the collection $\fH$ consisting of $|E(F)|$ copies of $H$ does not contain a transversal copy of $H$ either. However, Montgomery, M\"uyesser, and Pehova~\cite{alp_yani_richard} made the following observation which shows that $\rbdelta$ can be much larger than $\delta_{\mathcal{F},d}$. Set $\mathcal{F}=\{k\times (K_{2,3} \cup C_4)\colon k\in \mathbb{N}\}$ where $k\times G$ denotes the graph obtained by taking $k$ vertex-disjoint copies of $G$. It follows from a result of K\"uhn and Osthus \cite{kuhnosthus} that $\delta_{\mathcal{F},1} = 4/9$.
Consider the graph collection $\fH=(H_1,\dots,H_m)$ on $V$ obtained in the following way.
Partition $V$ into two almost equal vertex subsets, say $A$ and $B$, and suppose that $H_1=H_2=\cdots=H_{m-1}$ are all disjoint unions of a clique on $A$ and a clique on $B$. Suppose that $H_m$ is a complete bipartite graph between $A$ and $B$. Observe that each $H_i$ in this resulting graph collection has minimum degree $\lfloor |V|/2 \rfloor$. Further observe that if $\fH$ contains a transversal copy of some $F\in \mathcal{F}$, the edge of $K_{2,3}$ or $C_4$ that gets copied to an edge of $H_m$ would be a bridge (an edge whose removal disconnects the graph) of $F$. However, neither $K_{2,3}$ nor $C_4$ contains a bridge. Hence, $\rbdelta\geq 1/2$.
\par On the other hand, there are many natural instances where $\rbdelta= \delta_{\mathcal{F},d}$. When this equality holds, we say that the corresponding family $\mathcal{F}$ is $d$-\textit{colour-blind}, or just \textit{colour-blind} in the case $\cF$ is a family of graphs (and $d=1$).
For example, Joos and Kim~\cite{joos2020rainbow}, improving a result of Cheng, Wang, and Zhao~\cite{cheng} and confirming a conjecture of Aharoni~\cite{rainbowMantel}, showed that, if $n\geq 3$, then any $n$-vertex graph collection $\fG=(G_1,\ldots,G_n)$ with $\delta(G_i)\geq n/2$ for each $i \in [n]$ has a transversal copy of a Hamilton cycle. This generalises Dirac's classical theorem and implies that the family $\mathcal{F}$ of $n$-cycles is colour-blind\footnote{In fact, in this particular case, the corresponding thresholds are \textit{exactly} the same, and there is no need for an error term. We discuss this aspect of the problem further in our concluding remarks.}.
There are many more families of colour-blind (hyper)graphs. In particular, matchings~\cite{cheng2021rainbow,Lu_codegree_rb-matching_tight,lu2022rainbow,rainbow_matching}, Hamilton $\ell$-cycles~\cite{cheng2021hamilton}, factors~\cite{cheng2021rainbow,alp_yani_richard}, and spanning trees \cite{alp_yani_richard} have been extensively studied.
We recall that for $1 \le \ell <k$, a
$k$-uniform hypergraph is called an $\ell$-cycle if its vertices can be ordered cyclically such that each of its
edges consists of $k$ consecutive vertices and every two consecutive edges (in the natural
order of the edges) share exactly $\ell$ vertices.
In particular, $(k-1)$-cycles and $1$-cycles are known as \textit{tight} cycles and \textit{loose} cycles respectively.
\par In this paper, building on techniques introduced by Montgomery, Müyesser, and Pehova \cite{alp_yani_richard}, we give a widely applicable sufficient condition for a family of hypergraphs $\mathcal{F}$ to be colour-blind. This gives a unified proof of several of the aforementioned results, as well as many new rainbow Dirac-type results. The following theorem lists the applications we derive in the current paper, though we believe that our setting can capture even more families of hypergraphs. 
\begin{theorem}
\label{thm:applications}
    The following families of hypergraphs are all $d$-colour-blind. 
    \begin{enumerate}[label=(\Alph*)]
        \item 
        \label{thm:app_HC2}
        The family of the $r$-th powers of Hamilton cycles for fixed $r \ge 2$ (and $d=1$).
        \item \label{thm:app2}
        \label{thm:app_cycle}
        The family of $k$-uniform Hamilton $\ell$-cycles for the following ranges of $k$, $\ell$, and $d$.
        \begin{enumerate}[label=(B\arabic*)]
            \item $1 < \ell < k/2$ and $d=k-2$;
            \item \label{thm:app_2} $1 \le \ell < k/2$ or $\ell = k-1$, and $d=k-1$;
            \item $\ell=k/2$ and $k/2 < d \le k-1$ with $k$ even.
        \end{enumerate}
    \end{enumerate}
\end{theorem}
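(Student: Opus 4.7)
The plan is to deduce Theorem~\ref{thm:applications} from a general colour-blindness criterion---the main technical contribution of the paper, to be proved in the subsequent sections---rather than attacking each family of hypergraphs directly. This criterion will reduce proving $\rbdelta \le \delta_{\cF,d}$ to a short checklist: (i) the corresponding uncoloured Dirac-type result for $\cF$ holds with the asserted threshold; (ii) $F$ admits a linear-sized absorbing structure that can itself be embedded in a rainbow fashion into any host collection meeting the minimum-degree bound; and (iii) short rainbow connectors exist between suitable boundary configurations of $F$ inside such a host. Once such a criterion is in place, each part of Theorem~\ref{thm:applications} is established by plugging in the appropriate uncoloured result and verifying the local structural hypotheses for the family at hand.

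For part~\ref{thm:app_HC2}, the uncoloured input is the asymptotic Pósa--Seymour theorem of Komlós, Sárközy, and Szemerédi, giving $\delta_{\cF,1} = 2/3$. I would verify hypothesis (ii) by building the standard $C_n^2$ absorber from $O(1)$-sized squared-path gadgets, each capable of absorbing a pair of reserve vertices; since each gadget uses only a bounded number of colours and any host of minimum degree $(2/3 + o(1))n$ has many common neighbours for constant-sized vertex sets, a greedy colour-allocation produces these gadgets rainbowly. Hypothesis (iii) follows similarly: rainbow squared paths between prescribed short end-segments are obtained by short greedy extensions, reserving a fresh colour at each step.

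For part~\ref{thm:app_cycle}, the uncoloured thresholds in the three regimes (B1)--(B3) are known from work of Kühn, Osthus, Rödl, Ruciński, Szemerédi and others. The structural hypotheses are verified case by case: for $\ell < k/2$ the loose $\ell$-cycle admits the standard $\ell$-path absorber, whose rainbow embedding is essentially immediate from $d$-degree counting; for tight Hamilton cycles ($\ell = k-1$, covered by (B2)) the usual tight-path absorbers are built rainbowly via a co-degree extension argument; and in case (B3), the large co-degree regime makes both the absorbers and the connectors straightforward since every constant set has linearly many extensions in every hypergraph of the collection.

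The main obstacle is the rainbow absorption step for the squared Hamilton cycle in part~\ref{thm:app_HC2}. Unlike in ordinary cycles, an absorbing gadget for $C_n^2$ must incorporate an extra pair of vertices into a squared backbone without destroying the square structure on either side, so each gadget is necessarily more elaborate and consumes more colours. Ensuring that the entire collection of absorbing gadgets together with the squared path linking them on the reserve set forms a single \emph{rainbow} subgraph requires a careful two-stage colour allocation. Packaging this into a clean local condition---so that it and its analogues for tight and loose $\ell$-cycles can be discharged uniformly---is precisely the point of the general criterion we plan to establish.
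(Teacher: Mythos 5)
Your plan for the derivation is close in spirit to the paper's — both reduce each part of Theorem~\ref{thm:applications} to checking a short list of hypotheses against a general criterion proved later — but the hypotheses you propose are materially different from what the paper actually needs, and this difference hides a genuine gap in your argument.

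The paper's criterion (Theorem~\ref{thm:main}, via Definition~\ref{def:good}) asks for three properties of the link $\cA$: \textbf{Ab} and \textbf{Con}, which are \emph{uncoloured} absorption and connection statements that can be taken verbatim from existing absorption-method proofs of the uncoloured Dirac-type results, and a single coloured property \textbf{Fac}, which asks for a rainbow almost-perfect $\cA$-tiling using an adversarially prescribed small colour set. The uncoloured \textbf{Ab}/\textbf{Con} statements are then converted to work in the coloured setting not by building rainbow absorbers directly, but via the auxiliary-hypergraph trick (Proposition~\ref{prop:alsodirac}): pass to the hypergraph $\cK$ of edges receiving $\Omega(n)$ colours, which inherits the minimum-degree bound, run the uncoloured lemma there, and only at the end greedily assign one of the many available colours to each of the $O(1)$-per-edge connecting/absorbing edges. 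Your conditions (ii) and (iii), by contrast, ask for \emph{rainbow} absorbers and \emph{rainbow} connectors outright; you would then be re-proving strengthened versions of the classical lemmas case by case, which is exactly what the paper's framing is designed to avoid. In particular, for part~\ref{thm:app_HC2} you would have to redo the Levitt--Sárközy--Szemerédi absorbers in the rainbow setting, whereas the paper imports Lemmas~3, 5 of~\cite{square_ham_noreg} as black boxes and discharges the only coloured requirement (\textbf{Fac}) using the known rainbow $K_3$-factor theorem of~\cite{alp_yani_richard}.

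The genuine gap is that your plan never addresses how to arrange that at the end the spanning $\cA$-cycle uses \emph{every colour exactly once} and \emph{every vertex exactly once} simultaneously. A greedy ``reserve a fresh colour at each step'' construction cannot guarantee this: when you reach the last few steps, the remaining colours and remaining vertices are both adversarially constrained, and there is no reason a greedy choice made thousands of steps earlier leaves a compatible tail. The paper handles this with a dedicated \emph{colour-absorption} mechanism (Lemma~\ref{lem:colourabsorb:general}), setting aside a small uncoloured $\cA$-chain $\cS_1$ together with colour sets $A$ and $C$ such that for \emph{any} $\gamma n$-subset $B \subset C$, $\cS_1$ admits a rainbow colouring from $A \cup B$; the proof then builds the rest of the cycle forcing exactly $|C| - \gamma n$ colours of $C$ to be used, and absorbs the leftovers into $\cS_1$. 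You gesture at this difficulty in your last paragraph (``careful two-stage colour allocation'') but do not propose the structure that would make it work, and without it the reduction does not close. Note also that \textbf{Fac} is precisely what lets the proof flush an \emph{adversarial} residual colour set $C_0$ into rainbow copies of $\cA$ inside the reserve $R_2$ (Step~6 of the proof of Theorem~\ref{thm:main}); your outline has no analogue of this step.
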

\begin{remark} Theorem~\ref{thm:applications}~\ref{thm:app_2} when $\ell=k-1$ was originally proven by Cheng, Han, Wang, Wang, and Yang~\cite{cheng2021hamilton}, who raised the problem of obtaining the rainbow minimum degree threshold for a wider range of $\ell\in[k-2]$. Moreover, the case of Hamilton cycles in graphs (i.e. $k=2$ and $d=\ell=1$) was previously proven by Cheng, Wang, and Zhao \cite{cheng} (and their result was sharpened by Joos and Kim \cite{joos2020rainbow}).
\end{remark}
 
 Theorem~\ref{thm:applications} is derived from our main theorem, Theorem~\ref{thm:main}, in Section~\ref{sec:applications}. The precise statement of Theorem~\ref{thm:main} is quite technical, and will be given in Section~\ref{sec:statement}. In preparation for the formal statement, we now give some intuition for our approach. Firstly, Theorem~\ref{thm:main} is concerned with hypergraph families $\mathcal{F}$ with a `cyclic' structure. That is, we assume there exists a hypergraph $\cA$ such that all $F\in \mathcal{F}$ can be obtained by gluing several copies of $\cA$ in a Hamilton cycle fashion (see Definition~\ref{def:A-cycle}). For example, for $k$-uniform Hamilton cycles, $\cA$ would be a single $k$-uniform edge (see Figure~\ref{fig:edge}), whereas for the $r$-th power of a Hamilton cycle, $\cA$ would be a a clique on $r$ vertices (see Figure~\ref{fig:square}). In the uncoloured setting, most of the well-studied problems fit into this framework, including everything listed in Theorem~\ref{thm:applications}.
 \par A common framework for embedding such hypergraphs with cyclic structure is the absorption method. Suppose one wishes to reprove Dirac's theorem (any $n$-vertex graph $G$ with minimum degree at least $n/2$ contains a Hamilton cycle) via the most common-place variant of the absorption method. Then, the key steps would roughly be as follows. 
 \begin{itemize}[leftmargin=1.5cm]
     \item[Step 1.] \textbf{Set aside a vertex reservoir. } Let $R\subseteq V(G)$ be a small subset so that each vertex $v$ has about $|R|/2$ many neighbours in $R$. A randomly sampled set $R$ satisfies this property with high probability. 
     \item[Step 2.] \textbf{Find an absorber. } Show that $V(G)\setminus R$ contains a small subset $A$ and vertices $v,w\in A$ so that for any small enough subset $L\subseteq V(G)\setminus A$, we have that $G[L\cup A]$ contains a spanning path with endpoints $v$ and $w$.
     \item[Step 3.] \textbf{Cover the remainder via long paths. } Show that all but few vertices of $V(G)\setminus (A \cup R)$ can be covered by pairwise disjoint long paths in $G$.
     \item[Step 4.] \textbf{Build a long path.} Using the property of $R$, connect corresponding endpoints via short paths to build a path $P$ with endpoints $v$ and $w$, vertex-disjoint with $A\setminus\{v,w\}$, and covering all but a few vertices of $V(G)\setminus A$.
     \item[Step 5.] \textbf{Use the absorber.} By the property of $A$, the set $L=V(G)\setminus (A \cup V(P))$ can be used together with $A$ to connect $v$ and $w$ via a path $P'$. Then $P\cup P'$ is the desired Hamilton cycle.
 \end{itemize}
 
 Our main theorem, Theorem~\ref{thm:main}, essentially states that if there is a proof as above that $\delta$ is the uncoloured minimum degree threshold for some $\mathcal{F}$ with cyclic structure, then the rainbow minimum degree threshold of $\mathcal{F}$ is equal to $\delta$. Some partial progress towards such an abstract statement was already made in \cite{alp_yani_richard}. In fact, in \cite{alp_yani_richard}, it was remarked that, plausibly, transversal versions of other Dirac-type results can be shown by adapting the method from \cite{alp_yani_richard}, provided that one can prove strengthened versions of the non-transversal (uncoloured) embedding problem. For example, in \cite{alp_yani_richard}, this strengthening took the form of embedding trees with the location of a single vertex being specified adversarially (see Theorem 4.4 in \cite{alp_yani_richard}). Using such a strengthening, one can translate Steps 1 to 5 above into a coloured setting. On the other hand, the main advantage of our main theorem, Theorem~\ref{thm:main}, is that it does not require to make ad-hoc strengthenings to the uncoloured version of the result, allowing for a very short proof of Theorem~\ref{thm:applications}. To achieve this, we codify, through what we call properties \textbf{Ab} and \textbf{Con}, what it means for there to be streamlined absorption proof for the uncoloured result, and we use the existence of such a proof as a black-box. In our applications, to ensure that the relevant properties hold, we rely on existing lemmas in the literature without having to do any extra work (see Table~\ref{fig:table_applications}). 
 \par In addition to properties \textbf{Ab} and \textbf{Con} which guarantee we can rely on a streamlined absorption proof for the uncoloured result, we have one more hypothesis in Theorem~\ref{thm:main}, which we  call property \textbf{Fac}. One reason why transversal versions of Dirac-type results are more difficult is that every single hypergraph in the collection as well as every single vertex of the host graph needs to be utilised in the target spanning structure (the transversal). This is crucial as demonstrated by the construction given after Definition 1.2. In this construction, the possibility of finding a transversal copy of $\mathcal{F}$ is ruled out by showing that a particular graph in the collection (namely the hypergraph $H_m$) cannot be used in a transversal copy of a $K_{2,3}$ or $C_4$. Therefore, in addition to some properties which are related to the uncoloured case and where colours do not play any role, we require a property concerning the coloured case which we call \textbf{Fac}. This roughly states that vertex-disjoint copies of $\mathcal{A}$ (the building block of the hypergraph we are trying to find) can be found in a rainbow fashion using a fixed, adversarially specified set of hypergraphs from the collection. This ensures that we never get stuck while trying to use up every single colour/hypergraph that we start with. When $\mathcal{A}$ is just a single edge (as it will be the case for Theorem~\ref{thm:applications}~\ref{thm:app_cycle}), the property \textbf{Fac} is essentially trivial to check (see Observation~\ref{obs:fac}). For powers of Hamilton cycles, however, this property is more delicate and, to verify \textbf{Fac}, we rely on a non-trivial coloured property from \cite{alp_yani_richard}.
 
  \begin{remark} In principle, using Theorem~\ref{thm:main}, one can obtain short proofs of many other transversal Dirac-type results.
  But there does exist a natural instance of an uncoloured Dirac-type problem ($3$-uniform Hamilton $2$-cycles with $d=1$) with a proof based on the absorption method \cite{reiher2019minimum}, yet the structure of this proof does not fit into our framework (essentially due to complications with Step $4$). We discuss this further in Section~\ref{sec:concluding_remarks}.
  \end{remark}

\textbf{Organisation.} 
The rest of the paper is organised as follows.
In Section~\ref{sec:statement}, we introduce the necessary terminology for our main Theorem~\ref{thm:main} and give the formal statement.
In Section~\ref{sec:overview}, we introduce some tools that we will use later and provide an overview of the proof method. 
Section~\ref{sec:main_proof} contains the proof of Theorem~\ref{thm:main}, while Section~\ref{sec:applications} deals with its applications.
Finally, some concluding remarks and directions for future research are given in Section~\ref{sec:concluding_remarks}.

\textbf{Notation.} We make a few clarifying points about the notation that we use.
Recall that for a $k$-uniform hypergraph $H$ and $1 \le d < k$, we let $\delta_d(H)$ be the minimum number of edges of $H$ that any set of $d$ vertices of $V(H)$ is contained in.
Moreover, given vertex subsets $S$ and $V$ of a hypergraph, $\deg(S,V)$ denotes the number of $V'\subseteq V$ such that $S\cup V'$ is an edge of the hypergraph.

Recall as well that a hypergraph collection $\fH=(H_1,\ldots,H_m)$ is a collection of (not necessarily distinct) hypergraphs $H_i$, $i\in [m]$, which all have the same vertex set, and $\delta_d(\fH)=\min_{i\in [m]}\delta_d(H_i)$. Given a hypergraph collection $\fH=(H_1,\ldots,H_m)$ with vertex set $V$, and a set $U\subset V$, the collection of graphs $H_i[U]$, $i\in [m]$ induced on the vertex set $U$ is denoted by $\fH[U]$. We set $|\fH|$ to denote the size of $\fH=(H_1,\ldots,H_m)$, so that, in this case, $|\fH|=m$.

As mentioned earlier in the introduction, we will think of the edges of different hypergraphs in a collection as having different colours. In particular, given a hypergraph collection $\fH=(H_1,\ldots,H_m)$, we consider each hypergraph $H_i$ to have a colour $i$. Given a subgraph $H\subset \cup_{i\in [m]}H_i$, edge $e\in E(H)$ can be assigned colour $i$ if $e\in E(H_i)$. $H$ is a transversal if each edge can be assigned a distinct colour. Hypergraphs where an edge colour does not repeat is called \emph{rainbow}. When we say $H\subset \cup_{i\in [m]}H_i$ is \textit{uncoloured}, we emphasise that a colouring has not yet been assigned.  

We use standard hierarchical notation for constants, writing $x\ll y$ to mean that there is a fixed positive non-decreasing function $f$ on $(0,1]$ such that the subsequent statements hold for $x\leq f(y)$. Where multiple constants appear in a hierarchy, they are chosen from right to left. We omit rounding signs where they are not crucial.

\section{Statement of the main theorem}
This section introduces the relevant terminology and states our main Theorem~\ref{thm:main}. 
\label{sec:statement}
\subsection{Links, chains, and cycles}
An \textit{ordered hypergraph} is a hypergraph equipped with a linear order of its vertex set. For convenience, we often index the vertices of an ordered $n$-vertex hypergraph with $\{1,2,\ldots, n\}$ so that $v_i<v_j$ if and only if $i<j$. A subgraph of an ordered hypergraph inherits an ordering from the parent hypergraph in the obvious way. Whenever we state that two ordered hypergraphs are isomorphic, we mean that they are isomorphic as ordered hypergraphs.

\begin{definition}[$\ell$-link]
	Let $k,\ell,m \in \mathbb{N}$ with $\ell \leq  m$. Let $\mathcal{A} = (V,E)$ be an ordered $k$-uniform hypergraph on $m$ vertices. We call $\mathcal{A}$ an $\ell$-link of uniformity $k$ if $\mathcal{A}_s$ and $\mathcal{A}_t$ are isomorphic, where $\mathcal{A}_s = \mathcal{A}[\{v_1,\cdots,v_\ell\}]$, and $\mathcal{A}_t = \mathcal{A}[\{v_{m-\ell + 1},\cdots,v_m\}]$. 
	We refer to $m$ as the order of $\cA$ and we call the ordered hypergraphs $\mathcal{A}_s$ and $\mathcal{A}_t$ the start and the end of $\mathcal{A}$, respectively. 
\end{definition}

\begin{definition}[$\mathcal{A}$-chain]
	Let $k,\ell,m \in \mathbb{N}$ with $\ell \le m$, and $\mathcal{A}$ be an $\ell$-link of uniformity $k$ and order $m$. We say that an ordered hypergraph $\mathcal{P}$ is an $\mathcal{A}$-chain if the following properties hold.
	\begin{enumerate}
	    \item $v(\mathcal{P})=n=(m-\ell)t+\ell$ for some $t \in \mathbb{N}$.
	    \item Set $S_1=\{1,\ldots, m\}$ and for $1 < q \leq t$ define $S_q \subseteq [n]$ recursively as follows.
	    For $1 < q \le t$, if $S_{q-1}=\{s,\ldots, s+m-1\}$, define $S_q=\{s+m-\ell, \ldots, s+2m-\ell-1\}$. Then, for each $1 \leq q \leq t$, the hypergraph $\mathcal{P}_q=\mathcal{P}[\{v_i\colon i\in S_q\}]$ is isomorphic to $\mathcal{A}$.
	    \item Each edge of $\mathcal{P}$ is contained in $\mathcal{P}_q$ for some $q\in [t]$. 
	\end{enumerate}
	We refer to $t$ as the length of the $\cA$-chain and we call $\mathcal{P}_1$ and $\mathcal{P}_t$ the first and the last links of $\mathcal{P}$, respectively. Moreover, we call the start of $\mathcal{P}_1$ and the end of $\mathcal{P}_t$ the start and the end of $\mathcal{P}$, respectively, and refer to them collectively as the ends of $\mathcal{P}$.
\end{definition}

\begin{definition}[$\mathcal{A}$-cycle]
\label{def:A-cycle}
	Let $\mathcal{P}$ be an $\mathcal{A}$-chain. Let $\mathcal{S}$ and $\mathcal{T}$ be the start and the end of $\mathcal{P}$, respectively. Let $\phi$ be the isomorphism between the ordered hypergraphs $\mathcal{S}$ and $\mathcal{T}$, and identify $x\in \mathcal{S}$ with $\phi(x)\in \mathcal{T}$ for each $x \in \mathcal{S}$. We call the resulting (unordered) hypergraph an $\mathcal{A}$-cycle.
\end{definition}

We remark that with $\cA$ being an $\ell$-link of order $m$, if $\cP$ is an $\cA$-chain and $\cC$ is an $\cA$-cycle, then the following holds: $v(\cC) \in (m-\ell)\mathbb{N}$ and $e(\cC)=\frac{e(\cA)-e(\cA_s)}{m-\ell}v(\cC)$, while $v(\cP) \in (m-\ell)\mathbb{N}+\ell$ and $e(\cP) =  \frac{e(\cA)-e(\cA_s)}{m - \ell}v(\cP)-O(1)$, where $O(1)$ stands for a constant which only depends on $\cA$.

Observe that for each $1 \le \ell \le k$, a single $k$-uniform edge induces an $\ell$-link of uniformity $k$ and order $k$, and its chain (resp. cycle) corresponds to a $k$-uniform $\ell$-path (resp. cycle). 
Figure~\ref{fig:edge} shows the case $k=5$ and $\ell=2$.
Similarly, the compete graph on $r$ vertices induces a $(r-1)$-link of uniformity $2$ and order $r$, and its chain (resp. cycle) corresponds to the $(r-1)$-th power of a path (resp. cycle). 
Figure~\ref{fig:square} illustrates the case $r=3$.
Finally, Figure~\ref{fig:pillar} shows that a pillar can also be obtained as an $\cA$-chain.

\begin{figure}[htpb]
    \begin{center}
	
	\begin{tikzpicture}[scale=0.4,
		point/.style ={circle,draw=none,fill=#1,
			inner sep=0pt, minimum width=0.22cm,node contents={}}
		]
		
		\foreach \s in {0,12,24}{
			\foreach \t in {1,2,3,4,5}{
				\node () at ({\s+2*\t-6},0)   [point=black, label=above:${\t}$];
			}
			\draw[line width=0.3mm] (\s,0) ellipse (5cm and 2.45cm);
		}

		\foreach \s in {6,18}{
			\foreach \t in {1,2,3,4,5}{
				\node () at ({\s+2*\t-6},0)   [point=black, label=below:${\t}$];
			}
			\draw[line width=0.3mm] (\s,0) ellipse (5cm and 2.45cm);
		}

	\end{tikzpicture}
	\end{center}
	\captionsetup{font=footnotesize}
	\captionof{figure}{A $5$-uniform $2$-path is an $\cA$-chain, with $\cA$ being (any ordering of) a single $5$-uniform edge. The numbering of the vertices in each edge denotes the (ordered) isomorphism between that edge and $\cA$.}
	\label{fig:edge}
\end{figure}

\begin{figure}[htpb]
	\begin{center}
	\begin{tikzpicture}[scale=1,
		point/.style ={circle,draw=none,fill=#1,
			inner sep=0pt, minimum width=0.2cm,node contents={}}
		]
		
		\foreach \s in {0,2,4,6,8}{
			\node (a\s) at ({\s-1},0) [point=black];
			\draw ({sqrt(3)*0.25+\s-1},0.25) node {$1$};
			\node (b\s) at ({\s+1},0) [point=black];
			\draw ({-sqrt(3)*0.25+\s+1},0.25) node {$3$};
			\node (c\s) at (\s,{sqrt(3)}) [point=black, label=below:$2$];
			\draw[line width=0.3mm] (a\s) -- (b\s) -- (c\s) -- (a\s);		
		}

		\foreach \s in {1,3,5,7}{
			\node (a\s) at ({\s-1},{sqrt(3)}) [point=black];
			\draw ({sqrt(3)*0.25+\s-1},{sqrt(3)-0.25}) node {$1$};
			\node (b\s) at ({\s+1},{sqrt(3)}) [point=black];
			\draw ({-sqrt(3)*0.25+\s+1},{sqrt(3)-0.25}) node {$3$};
			\node (c\s) at (\s,0) [point=black, label=above:$2$];
			\draw[line width=0.3mm] (a\s) -- (b\s);		
		}

	\end{tikzpicture}
	\end{center}
	\captionsetup{font=footnotesize}
	\captionof{figure}{The square of a path is an $\cA$-chain, with $\cA$ being (any ordering of) a triangle.}
	\label{fig:square}
\end{figure}

\begin{figure}[htpb]
	\begin{center}
	\begin{tikzpicture}[scale=1,
		point/.style ={circle,draw=none,fill=#1,
			inner sep=0pt, minimum width=0.2cm,node contents={}}
		]
		
		\foreach \s in {0,2,4,6,8}{
			\node (a\s) at (\s,0) [point=black,label=above right:$2$];
			\node (b\s) at (\s+2,0) [point=black,label=above left:$4$];
			\node (c\s) at (\s+2,2) [point=black,label=below left:$3$];
			\node (d\s) at (\s,2) [point=black,label=below right:$1$];
			\draw[line width=0.3mm] (a\s) -- (b\s) -- (c\s) -- (d\s) -- (a\s);
		}
	\end{tikzpicture}
	\end{center}
	\captionsetup{font=footnotesize}
	\captionof{figure}{A pillar is an $\cA$-chain, with $\cA$ being the above ordering of a cycle on $4$ vertices.}
	\label{fig:pillar}
\end{figure}

We now state the properties we require from the link $\cA$ for our main theorem to hold.
\begin{definition}
\label{def:good}
    Let $k,\ell,m \in \mathbb{N}$ with $\ell \le m$, $\mathcal{A}$ be an $\ell$-link of order $m$ and uniformity $k$, and $d\in [k-1]$.
    We say that $\cA$ is $(\delta,d)$-\emph{good} if the following three properties hold.
    \begin{enumerate}[align=right]
    \item[\textbf{$\ab$}]
        For any $\alpha>0$, there exist $0 < \tau, \eta \le \alpha$ and $n_0 \in \mathbb{N}$ so that if $\mathcal{H}$ is a $k$-uniform hypergraph on $n\geq n_0$ vertices with $\delta_d(\mathcal{H}) \geq (\delta + \alpha)n^{k-d}$, then there exists $A\subseteq V(\mathcal{H})$ of size at most $\tau n$ with the following property. 
        
        For any $L\subseteq V(\mathcal{H})\setminus A$ of size at most $\eta n$ with $|L|\in (m-\ell)\mathbb{N}$, there exists an embedding of an $\mathcal{A}$-chain to $\mathcal{H}$ with vertex set $A\cup L$. Furthermore, the embedding of the start and the end of the $\mathcal{A}$-chain does not depend on the subset $L$.
        
    \item[\textbf{$\con$}]
        For any $\alpha>0$, there exist a positive integer $c$ and $n_0 \in \mathbb{N}$ so that if $\mathcal{H}$ is a $k$-uniform hypergraph $\mathcal{H}$ on $n\geq n_0$ vertices with $\delta_d(\mathcal{H}) \geq (\delta + \alpha)n^{k-d}$, the following holds.
            
        Let $\mathcal{S}$ and $\mathcal{T}$ be vertex-disjoint copies of $\cA_s$ in $\mathcal{H}$. Then, $\mathcal{H}$ contains an embedding of an $\mathcal{A}$-chain of length at most $c$ with start $\mathcal{S}$ and end $\mathcal{T}$.
        
    \item[\textbf{$\fac$}]
        For any $\alpha>0$, there exist $\beta_0>0$ and $n_0 \in \mathbb{N}$ so that the following holds for any $n \ge n_0$ and $\beta\leq \beta_0$.
        
        Let $\mathbf{H}$ be a hypergraph collection on vertex set $[n]$ with $|\mathbf{H}| \le \beta n$ and $\delta_d(\fH) \geq (\delta + \alpha)n^{k-d}$.
        Moreover, suppose $e(\mathcal{A})$ divides $|\mathbf{H}|$. Then $\mathbf{H}$ contains a transversal which consists of $|\mathbf{H}|/e(\mathcal{A})$ vertex-disjoint copies of $\mathcal{A}$.
    \end{enumerate}
\end{definition}

 We remark that the property \textbf{Fac} easily holds when $\cA$ consists of a single edge, as stated in the following observation, whose proof appears in Section~\ref{sec:overview}.
\begin{observation}
\label{obs:fac}
Let $k \in \mathbb{N}$, $d \in [k-1]$ and $\cA$ be a $k$-uniform edge. 
Then, for any $\delta >0$, property $\textbf{Fac}$ holds for $\cA$ (with respect to minimum $d$-degree).
\end{observation}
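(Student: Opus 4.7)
Since $\cA$ is a single $k$-uniform edge, property $\textbf{Fac}$ reduces to finding a rainbow matching in $\fH$: namely, $m := |\fH|$ pairwise vertex-disjoint edges $e_1, \ldots, e_m$, where $e_i \in H_i$ for each $i \in [m]$. (The divisibility condition $e(\cA) \mid |\fH|$ is automatic since $e(\cA)=1$, and we must produce $|\fH|$ disjoint copies of $\cA$.) The plan is a straightforward greedy argument: process the hypergraphs $H_1, \ldots, H_m$ in order and, at step $i$, choose any edge $e_i$ of $H_i$ that avoids the vertices already used by $e_1, \ldots, e_{i-1}$.

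To verify that the greedy step always succeeds, let $U \subseteq [n]$ be the set of vertices covered by $e_1, \ldots, e_{i-1}$, so $|U| \le k(i-1) \le k\beta n$. On one hand, a standard double count using $\delta_d(H_i) \ge (\delta+\alpha)n^{k-d}$ gives
\[
e(H_i) \;\ge\; \frac{(\delta+\alpha)}{\binom{k}{d}}\binom{n}{d} n^{k-d} \;\ge\; c_1 n^k
\]
for some constant $c_1 = c_1(\delta,\alpha,k,d) > 0$ provided $n \ge n_0$. On the other hand, every single vertex is contained in at most $\binom{n-1}{k-1}$ edges of $H_i$, so the number of edges of $H_i$ meeting $U$ is at most
\[
|U| \binom{n-1}{k-1} \;\le\; k \beta n \cdot \binom{n-1}{k-1} \;\le\; c_2\, \beta\, n^k
\]
for some constant $c_2 = c_2(k)$.

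Choosing $\beta_0 := c_1/(2c_2)$ (and $n_0$ large enough for the lower bound on $e(H_i)$ to kick in), for every $\beta \le \beta_0$ and $n \ge n_0$ the number of edges of $H_i$ avoiding $U$ is strictly positive. We take $e_i$ to be any such edge and continue. Iterating through all $i \in [m]$ yields the desired rainbow matching, which is precisely a transversal consisting of $|\fH|/e(\cA)$ vertex-disjoint copies of $\cA$. There is no real obstacle in this argument; the only point requiring attention is calibrating $\beta_0$ against the constants arising from the degree hypothesis, and the argument is uniform in $\delta > 0$.
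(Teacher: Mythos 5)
Your proof is correct and follows essentially the same greedy strategy as the paper's: iterate through the hypergraphs, picking an edge disjoint from those already used, and calibrate $\beta_0$ so that the used vertex set can never exhaust the edges of a remaining $H_i$. The only cosmetic difference is in the bookkeeping: the paper invokes its Observation~\ref{obs:mindegreecounting} to argue that the minimum $d$-degree of $H_i$ restricted to the unused vertices stays positive (giving $\beta_0 = \alpha/(2k)$, independent of $\delta$), whereas you compare a lower bound on $e(H_i)$ obtained by double counting against an upper bound on the edges meeting the used set, which makes your $\beta_0$ depend on $\delta$ as well --- but that is permitted by the quantifier order in property \textbf{Fac}, so both routes are valid.
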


\subsection{Main theorem}
We have now introduced all the necessary terminology to state our main theorem. Recall that, following Definition~\ref{def:min_degree_threshold}, the uncoloured minimum $d$-degree threshold for a Hamilton $\cA$-cycle, with $\cA$ being a link of uniformity $k$, is the smallest real number $\delta = \delta(\mathcal{A}, d)$ with the following property. For any $\alpha>0$, there exists $n_0 \in \mathbb{N}$ so that for any $n\in (m-\ell)\mathbb{N}$ with $n\geq n_0$, every $k$-uniform hypergraph $\mathcal{H}$ on $n$ vertices with $\delta_d(\mathcal{H})\geq (\delta + \alpha )n^{k-d}$ contains a Hamilton $\mathcal{A}$-cycle.

\begin{theorem}[Main theorem]
\label{thm:main}
    Let $k,\ell,m \in \mathbb{N}$ with $\ell \le m$, $\mathcal{A}$ be an $\ell$-link of order $m$ and uniformity $k$, and $d \in [k-1]$.
    Let $\delta=\delta(\mathcal{A},d)$ be the uncoloured minimum $d$-degree threshold for the containment of a Hamilton $\mathcal{A}$-cycle and
    suppose that $\mathcal{A}$ is $(\delta_0,d)$-good for some $\delta_0\geq \delta$.
    Then, for any $\alpha>0$, there exists $n_0=n_0(\cA,\alpha,d) \in \mathbb{N}$ so that for any $n\in (m-\ell)\mathbb{N}$ with $n\geq n_0$, the following holds.
    \par Let $\fH$ be a $k$-uniform hypergraph collection on vertex set $[n]$ with $|\fH|=\frac{e(\mathcal{A})-e(\mathcal{A}_s)}{m-\ell}n$ and $\delta_d(\fH)\geq (\delta_0+\alpha)n^{k-d}$. Then $\fH$ contains a transversal copy of a Hamilton $\mathcal{A}$-cycle.
\end{theorem}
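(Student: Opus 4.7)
The plan is to follow the five-step absorption template outlined in the introduction, while carefully apportioning colours so that every hypergraph in the collection contributes exactly one edge to the final transversal copy of the Hamilton $\mathcal{A}$-cycle. First, I would randomly partition the colour set $[m]$ (where $m=|\fH|$) into three parts: a small set $C_{\text{abs}}$ reserved for the absorber, a small set $C_{\text{con}}$ reserved for connections, and a large ``factor'' set $C_{\text{fac}}$ whose cardinality is arranged to be divisible by $e(\mathcal{A})$. Simultaneously, I would sample a small reservoir $R\subseteq V$ whose size is comparable to $|C_{\text{con}}|$. Standard concentration arguments ensure that with positive probability every colour class retains minimum $d$-degree at least $(\delta_0+\alpha/2)n^{k-d}$ inside each relevant induced vertex subset, so that \textbf{Ab}, \textbf{Con} and \textbf{Fac} may still be applied to the subcollections.

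Next, I would produce a rainbow absorber. Applying property \textbf{Ab} to a single representative hypergraph $H_c$, $c\in C_{\text{abs}}$, on $V\setminus R$ yields an uncoloured absorbing set $A$ of size at most $\tau n$ together with a fixed pair of ends $(\mathcal{S},\mathcal{T})$, such that for every admissible small leftover $L\subseteq V\setminus(R\cup A)$ with $|L|\in (m-\ell)\mathbb{N}$ there exists an uncoloured $\mathcal{A}$-chain on $A\cup L$ with ends $(\mathcal{S},\mathcal{T})$. The key additional step is to rainbow-realise the edges of this absorber using colours from $C_{\text{abs}}$; this is done via a bipartite Hall-type matching between the edges of the generic absorbing structure and the colours in $C_{\text{abs}}$, where an edge is matchable to a colour $c$ whenever the corresponding hyperedge belongs to $H_c$. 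The high minimum $d$-degree of every $H_c$ together with some flexibility in the choice of absorber makes Hall's condition easy to verify.

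Having fixed the absorber and its rainbow realisation, I would apply property \textbf{Fac} to the collection $\fH[V\setminus(R\cup A)]$ restricted to colours in $C_{\text{fac}}$ to obtain a rainbow $\mathcal{A}$-factor covering almost all of $V\setminus(R\cup A)$. I would then iteratively apply property \textbf{Con} to connect the ends of these factor pieces, together with the ends $\mathcal{S}$ and $\mathcal{T}$ of the absorber, into a single long rainbow $\mathcal{A}$-chain. Each connection uses $O(1)$ vertices from $R$ and $O(1)$ colours from $C_{\text{con}}$; as with the absorber, each short chain is first embedded uncolouredly via \textbf{Con} and then rainbow-coloured through a local Hall argument, which works because $|C_{\text{con}}|$ vastly exceeds the total number of connecting chains.

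At this stage a rainbow $\mathcal{A}$-chain $\mathcal{P}$ has been assembled with ends $\mathcal{S},\mathcal{T}$ covering all but a small leftover set $L\subseteq V\setminus A$; provided divisibilities have been arranged carefully throughout, $|L|\le \eta n$ and $|L|\in(m-\ell)\mathbb{N}$, so the absorbing property of $A$ furnishes an $\mathcal{A}$-chain on $A\cup L$ with ends $(\mathcal{S},\mathcal{T})$ that completes $\mathcal{P}$ into a Hamilton $\mathcal{A}$-cycle. The main obstacle will be the colour bookkeeping: one must guarantee that after every phase the numbers of used vertices and used colours match up so that the final absorber step has precisely the right $|L|$ and precisely the right multiset of remaining colours in $C_{\text{abs}}$. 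This requires fine-grained balancing during the random partition and an explicit accounting of colours consumed at each application of \textbf{Con}, and this is where most of the technical weight of the proof sits.
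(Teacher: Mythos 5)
The central idea your proposal is missing is the \emph{colour absorber}. Since $|\fH|=\frac{e(\cA)-e(\cA_s)}{m-\ell}n$ equals the exact number of edges of a Hamilton $\cA$-cycle, every single colour must be used exactly once. Your plan — partition colours into $C_{\text{abs}}$, $C_{\text{con}}$, $C_{\text{fac}}$, use \textbf{Fac} to use up $C_{\text{fac}}$ exactly, use a few colours from $C_{\text{con}}$ for connections, and hope that $C_{\text{abs}}$ and the leftover of $C_{\text{con}}$ land exactly on the absorber's edge count — leaves a surplus of unused colours with no mechanism for disposing of them. You acknowledge this as ``the main obstacle,'' but the fine-grained accounting you gesture at cannot fix it: the number of connection edges (and hence the number of colours wasted in $C_{\text{con}}$) is not something you control precisely beforehand, and \textbf{Fac} forces you to use an exact multiple of $e(\cA)$ colours, not exactly $|C_{\text{fac}}|$. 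The paper resolves this by first building (Lemma~\ref{lem:colourabsorb:general}) an $\cA$-chain $\cS_1$ of length $\beta n$ together with colour sets $A,C$ such that $\cS_1$ can be rainbow-coloured from $A\cup B$ for \emph{any} $B\subseteq C$ of the right size; the rest of the Hamilton cycle is then built using all colours outside $A\cup C$ and an arbitrary subset of $C$, and whatever $\gamma n$ colours of $C$ remain at the end are handed to $\cS_1$. Without this flexibility device, the colour counts simply do not close.

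A second, independent flaw is in how you propose to colour the vertex absorber. You plan to find the absorbing set $A$ by applying \textbf{Ab} to a single colour class $H_c$ and then ``rainbow-realise the edges of this absorber'' via a Hall matching into $C_{\text{abs}}$. But the absorbing chain that $A$ eventually supplies is \emph{different for each leftover set $L$} — its edge set is not determined until the very last step — so there is no fixed set of edges to match colours to in advance, and the Hall argument has nothing to apply to. Moreover, even if you run it for the $L=\emptyset$ chain, the property you actually need is that for the $L$ you get at the end, the chain on $A\cup L$ has edges available in the still-unused colours; applying \textbf{Ab} inside a single $H_c$ gives no such guarantee. The paper instead applies \textbf{Ab} inside an auxiliary hypergraph $\cK_1$ whose edges are precisely those lying in $\Omega(\gamma n)$ of the colours of a small reserved set $C_1$, so that whichever absorbing chain is ultimately used, its edges can be greedily coloured at the very end. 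The same auxiliary-hypergraph trick (via Proposition~\ref{prop:alsodirac}) is used to make \textbf{Con} usable in the rainbow setting; your plan to rainbow-colour each connection ``through a local Hall argument'' again implicitly needs this reduction but does not supply it.
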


Observe that the quantity $\frac{n}{m-\ell}(e(\mathcal{A})-e(\mathcal{A}_s))$ appearing in Theorem~\ref{thm:main} is precisely the number of edges in a Hamilton $\cA$-cycle covering $n$ vertices.
Therefore, it is also the size of a hypergraph collection on $[n]$ containing a transversal copy of a Hamilton $\cA$-cycle.
Moreover, if Theorem~\ref{thm:main} holds with $\delta_0=\delta$, then the family of Hamilton $\cA$-cycles is colour-blind.

\section{Tools and proof overview}
\label{sec:overview}
\subsection{Tools}
We begin with a simple proposition and a trivial observation which we are going to use often in the paper.

\begin{proposition}\label{prop:alsodirac} Let $0\leq \alpha, \delta\leq 1$, and $d, k, m,n\in \N$, with $1 \le d \le k-1$.
Let $\fH$ be a $k$-uniform hypergraph collection on vertex set $[n]$ with $|\fH|=m$ and $\delta_d(\fH)\geq \delta n^{k-d}$. Let $\cK$ be the $k$-uniform hypergraph with vertex set $[n]$, where $e$ is an edge of $\cK$ if $e\in E(H_i)$ for at least $\alpha m$ values of $i\in [m]$.
Then, $\delta_d(\cK)\geq (\delta-\alpha)n^{k-d}$.
\end{proposition}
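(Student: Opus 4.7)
The plan is to prove this by a standard double-counting argument. Fix an arbitrary $d$-set $S \subseteq [n]$; I want to show that $S$ lies in at least $(\delta - \alpha)n^{k-d}$ edges of $\mathcal{K}$. The natural quantity to count is the number of pairs $(e, i) \in E(\cup_j H_j) \times [m]$ with $S \subseteq e$ and $e \in E(H_i)$, summed over edges $e$ containing $S$.

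For the lower bound, I sum over $i \in [m]$ first: by the minimum $d$-degree hypothesis $\delta_d(H_i) \ge \delta n^{k-d}$ for each $i$, the total number of such pairs is at least $\delta m n^{k-d}$. For the upper bound, I sum over edges $e \supseteq S$ first, splitting into two cases: every edge $e$ of $\mathcal{K}$ containing $S$ contributes at most $m$ to the count (there are at most $m$ hypergraphs in the collection), while every other edge $e$ containing $S$ contributes strictly less than $\alpha m$ by the definition of $\mathcal{K}$. Since the total number of $(k-d)$-subsets of $[n] \setminus S$ is at most $\binom{n-d}{k-d} \le n^{k-d}$, the contribution of the non-$\mathcal{K}$ edges is bounded by $\alpha m \cdot n^{k-d}$.

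Combining the two bounds gives
\[
\delta m n^{k-d} \;\le\; \deg_{\mathcal{K}}(S)\cdot m \;+\; \alpha m\, n^{k-d},
\]
and dividing through by $m$ yields $\deg_{\mathcal{K}}(S) \ge (\delta - \alpha) n^{k-d}$. Since $S$ was an arbitrary $d$-set, the conclusion follows. There is no real obstacle here: the only thing to be mildly careful about is the upper bound on the number of candidate $(k-d)$-extensions of $S$, and the fact that an edge not in $\mathcal{K}$ appears in strictly fewer than $\alpha m$ of the $H_i$'s (so that, up to an irrelevant rounding, the contribution of non-$\mathcal{K}$ edges is at most $\alpha m$ each).
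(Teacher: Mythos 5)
Your argument is exactly the paper's proof: the double count of pairs $(e,i)$ with $S\subseteq e\in E(H_i)$ yields precisely the displayed inequality $m\cdot\delta n^{k-d}\le \sum_{i\in[m]} d_{\cH_i}(S)\le m\cdot d_\cK(S)+\alpha m\, n^{k-d}$, which is what the paper writes, and dividing by $m$ finishes. You have simply spelled out the two sides of the count more explicitly; there is no substantive difference.
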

\begin{proof}
The notation $\deg_\cK$ stands for the $d$-degree in $\cK$. 
For each $d$ pairwise distinct vertices $v_1, \dots, v_d \in [n]$, we have
\[
m\cdot \delta n^{k-d} \leq \sum_{i\in [m]}\deg_{H_i}(v_1,\dots,v_d)\leq\
m\cdot \deg_\cK(v_1,\dots,v_d)+n^{k-d} \cdot \alpha m,
\]
and therefore $\deg_\cK(v_1,\dots,v_d)\geq (\delta-\alpha)n^{k-d}$. Thus, $\delta_d(\cK)\geq (\delta-\alpha)n^{k-d}$, as wanted.
\end{proof}

\begin{observation}\label{obs:mindegreecounting}
Let $\fH$ be a $k$-uniform hypergraph collection on $V$ with $\delta_d(\fH) \ge \delta n^{k-d}$. Let $S\subseteq V$ with $|S|\leq \zeta n$. Then $\fH\setminus S$ has minimum $d$-degree at least $(\delta-\zeta) n^{k-d}$.
\end{observation}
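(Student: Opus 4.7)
The plan is to prove the bound for each hypergraph in the collection separately, since $\delta_d(\fH\setminus S)=\min_{i}\delta_d(H_i\setminus S)$ by definition. So I would fix an arbitrary hypergraph $H_i$ in $\fH$ and an arbitrary $d$-subset $T\subseteq V\setminus S$, and show that the number of edges of $H_i$ containing $T$ and lying entirely in $V\setminus S$ is at least $(\delta-\zeta)n^{k-d}$.

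The key step is a straightforward counting argument. By hypothesis, $\deg_{H_i}(T)\geq \delta n^{k-d}$, where $\deg_{H_i}(T)$ denotes the number of edges of $H_i$ containing $T$. Each such edge is specified by $T$ together with a $(k-d)$-subset of $V\setminus T$. An edge fails to lie in $V\setminus S$ precisely when its $(k-d)$-subset intersects $S$, and the number of $(k-d)$-subsets of $V\setminus T$ meeting $S$ is at most $|S|\cdot n^{k-d-1}\leq \zeta n\cdot n^{k-d-1}=\zeta n^{k-d}$ (choose a vertex of $S$ to include, then extend arbitrarily). Subtracting, we obtain $\deg_{H_i\setminus S}(T)\geq (\delta-\zeta)n^{k-d}$, and since this bound is uniform over $i$ and $T$, the observation follows. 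There is no real obstacle here — the argument only uses the definition of minimum $d$-degree and the trivial upper bound on the edges destroyed by deleting $S$, and it is essentially the hypergraph analogue of the fact that deleting a small set of vertices from a graph decreases every vertex degree by at most the size of the deleted set.
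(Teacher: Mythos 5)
Your argument is correct and coincides with the paper's: both fix a $d$-set $T$ (the paper calls it $D$) disjoint from $S$, bound the number of edges through $T$ that meet $S$ by $|S|\cdot n^{k-d-1}\le\zeta n^{k-d}$, and subtract. No differences of substance.
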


\begin{proof}
Let $H$ be a hypergraph in $\fH$ and let $D$ be a set of size $d$ disjoint with $S$. For any vertex $v$ not in $D$, the set $D\cup\{v\}$ can have degree at most $n^{k-d-1}$. Therefore, there are at most $\zeta n^{k-d}$ edges containing $D$ which are also incident to $S$. Hence, $H\setminus S$ has minimum $d$-degree at least $(\delta-\zeta) n^{k-d}$, implying the observation.
\end{proof}

Observation~\ref{obs:mindegreecounting} allows a short proof of Observation~\ref{obs:fac}.
\begin{proof}[Proof of Observation~\ref{obs:fac}]
Let $\delta, \alpha>0$, set $\beta_0=\alpha/(2k)$, and let $\beta \le \beta_0$.
Let $\mathbf{H}$ be a hypergraph collection on $[n]$ with $\delta_d(\fH) \geq (\delta + \alpha) n^{k-d}$ and $|\mathbf{H}| \leq \beta n$. 
Suppose that we have found $s<|\fH|$ vertex-disjoint copies of $\mathcal{A}$ on $[n]$ together with a rainbow colouring (using $s$ colours), and let $S$ be the vertex set spanned by those copies.
Observe that $|S| = sk\leq \alpha n /2$.
Let $H$ be a hypergraph in $\fH$ not yet used, then by Observation~\ref{obs:mindegreecounting}, $H[V\setminus S]$ still contains an edge and thus a copy of $\cA$.
Hence we can extend the collection of copies of $\mathcal{A}$ in a rainbow fashion. This proves the observation.
\end{proof}

In the proof of Theorem~\ref{thm:main}, we need the following result that states that the vertex set of a hypergraph collections can be partitioned into linear sized sets, each preserving good minimum degree conditions in each of the hypergraphs of the collection.

\begin{lemma}
\label{lem:random}
Let $1/n \ll \alpha, \beta$ and $m\leq n^2$.
Let $t \in \mathbb{N}$ and $n_1, \dots, n_t \ge \beta n$ be integers such that $\sum_{i=1}^t n_i=n$.
Let $\fH$ be a $k$-uniform hypergraph collection on vertex set $[n]$ with $|\fH|=m$ and $\delta_d(\fH) \ge (\delta+\alpha) n^{k-d}$.
Then there exists a partition of $[n]$ into $V_1,\dots,V_t$ with $|V_i|=n_i$ for $i \in [t]$ such that any $S \in \binom{[n]}{d}$ has degree at least $(\delta + \alpha/2)n_i^{k-d}$ into $V_i$ with respect to any of the $m$ hypergraphs in $\fH$. 
\end{lemma}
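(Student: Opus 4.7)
The strategy is a standard probabilistic one: sample a partition by independent assignment, use McDiarmid's bounded-differences inequality to obtain degree concentration, take a union bound over all bad events, and then perform a small correction to enforce the target sizes. The main obstacle is that the degree condition must hold simultaneously for every $d$-set $S$, every part $V_i$, and every hypergraph $H_j$ in $\fH$, so the tail bound must be strong enough to absorb a polynomial-size union bound.

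First, I would assign each vertex of $[n]$ independently to part $V_i$ with probability $p_i := n_i/n$. Fix a hypergraph $H_j \in \fH$, a $d$-set $S \in \binom{[n]}{d}$, and an index $i \in [t]$. Each $(k-d)$-subset $T \subseteq [n]\setminus S$ with $S \cup T \in E(H_j)$ satisfies $T \subseteq V_i$ with probability exactly $p_i^{k-d}$, so
$$\E\!\left[\deg_{H_j}(S, V_i)\right] \;=\; \deg_{H_j}(S)\cdot p_i^{k-d} \;\geq\; (\delta+\alpha)\, n^{k-d}\cdot (n_i/n)^{k-d} \;=\; (\delta+\alpha)\, n_i^{k-d}.$$
Reassigning a single vertex changes $\deg_{H_j}(S, V_i)$ by at most the number of $(k-d)$-subsets containing that vertex, namely $O(n^{k-d-1})$. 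McDiarmid's inequality therefore yields, using $n_i \geq \beta n$,
$$\P\!\left[\deg_{H_j}(S, V_i) \leq \big(\delta + \tfrac{3\alpha}{4}\big) n_i^{k-d}\right] \;\leq\; \exp\!\big(-\Omega(\alpha^2 \beta^{2(k-d)}\, n)\big).$$
Since $t \leq 1/\beta = O(1)$ and $m \leq n^2$, a union bound over the $O(n^{d+2})$ triples $(S, i, j)$ shows that all these degree inequalities hold simultaneously with probability $1 - o(1)$.

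In parallel, a Chernoff bound gives $\big||V_i| - n_i\big| \leq n^{2/3}$ for every $i \in [t]$ with probability $1 - o(1)$. Fix an outcome for which both conclusions hold, and arbitrarily redistribute at most $O(n^{2/3})$ vertices between the parts so as to enforce the exact sizes $|V_i| = n_i$. This perturbation changes every $\deg_{H_j}(S, V_i)$ by at most $O(n^{2/3} \cdot n^{k-d-1}) = o(n^{k-d})$, a quantity comfortably absorbed into the slack of $\tfrac{\alpha}{4}\, n_i^{k-d} = \Omega(\beta^{k-d}\, n^{k-d})$. The resulting partition has the prescribed part sizes together with $\deg_{H_j}(S, V_i) \geq (\delta + \alpha/2)\, n_i^{k-d}$ for every $d$-set $S$, every $i \in [t]$, and every hypergraph $H_j \in \fH$, as required.
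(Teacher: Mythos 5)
Your proof is correct and follows the same overall template as the paper's -- random partition, a bounded-differences concentration inequality, a polynomial union bound over all $(S,i,j)$ triples -- but you implement the sampling step differently. The paper draws a uniformly random partition with the exact prescribed part sizes and applies a version of McDiarmid's inequality tailored to fixed-size random subsets (their Lemma~\ref{lem:McD}, packaged as Lemma~\ref{lem:McDAppl}); each $V_i$ is then, marginally, a uniformly random $n_i$-set, so no post-processing is needed. You instead assign vertices independently to parts, invoke the standard independent-variable form of McDiarmid, and then pay for the resulting size fluctuations with a Chernoff bound plus a deterministic fix-up moving $O(n^{2/3})$ vertices; the fix-up is harmless because it perturbs each degree by $o(n^{k-d})$, which you correctly observe is swallowed by the $\tfrac{\alpha}{4}n_i^{k-d}=\Omega(\beta^{k-d}n^{k-d})$ slack you reserved. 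Both routes work; the paper's is marginally more streamlined since the sizes come out exactly right, while yours trades that for using only the more elementary independent form of McDiarmid at the cost of an explicit correction step that must be (and is) checked to be negligible.
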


We will show that a partition chosen uniformly at random has the properties required from Lemma~\ref{lem:random} with high probability.
For that, we use a concentration inequality due to McDiarmid~\cite{mcdiarmid}, whose present formulation can be found in~\cite{mcdiarmid_formulation}.

\begin{lemma}[Lemma $6.1$ in~\cite{mcdiarmid_formulation}]
\label{lem:McD}
Let $c > 0$ and let $f$ be a function defined on the set of subsets of some set $U$ such that $|f(U_1)-f(U_2)|\leq c$ whenever $|U_1|=|U_2|=m$ and $|U_1\cap U_2|=m-1$. Let $A$ be a uniformly random $m$-subset of $U$. Then for any $\alpha>0$ we have
\begin{equation*}
    \mathbb{P} \left[ |f(A)-\mathbb{E}[f(A)]|\geq \alpha c \sqrt{m} \right] \leq 2 \exp (-2\alpha^2).
\end{equation*}
\end{lemma}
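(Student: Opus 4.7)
The plan is to prove this via a Doob-martingale argument combined with the Hoeffding form of Azuma's inequality. Represent the uniform random $m$-subset $A\subseteq U$ as $\{X_1,\ldots,X_m\}$, where $(X_1,\ldots,X_m)$ is a uniformly random sequence of $m$ distinct elements drawn from $U$ without replacement, and define the Doob martingale $Y_i=\E[f(A)\mid X_1,\ldots,X_i]$ for $0\le i\le m$, so that $Y_0=\E[f(A)]$ and $Y_m=f(A)$.

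The crux is to verify that each martingale difference $Y_i-Y_{i-1}$ almost surely lies in an interval of width at most $c$. For this, I would condition on $X_1,\ldots,X_{i-1}$ and consider two potential values $x,x'$ for $X_i$. By a coupling argument, one samples the remaining suffix $(X_{i+1},\ldots,X_m)$ using the same uniformly random ordering of an $(m-i)$-subset of $U\setminus\{X_1,\ldots,X_{i-1},x,x'\}$ in both cases, then applies a symmetric swap to correct the marginal distribution of $A$. The resulting subsets $A$ and $A'$ differ only in the pair $\{x,x'\}$, so $|f(A)-f(A')|\le c$ by hypothesis, and averaging gives a bound of $c$ on the difference between the two conditional expectations, as required.

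Once the bounded differences are in place, the Hoeffding version of Azuma's inequality yields
$$\Pr\big[|f(A)-\E[f(A)]|\ge t\big]\le 2\exp\big(-2t^2/(mc^2)\big),$$
and setting $t=\alpha c\sqrt{m}$ produces the advertised bound $2\exp(-2\alpha^2)$. I expect the main obstacle to be the coupling step that establishes the bounded differences: the naive coupling that simply reuses the same suffix does not preserve the correct marginal distribution of $A$ (since it forbids $x'$ from appearing among $X_{i+1},\ldots,X_m$), so one must include an additional symmetrization -- or equivalently work directly with a uniform random permutation of $U$ and compare adjacent transpositions. This is standard in the sampling-without-replacement literature but is the one place where care is required, and it is also the step that is responsible for the sharp constant $2$ in the exponent (as opposed to the weaker $1/2$ one obtains from a crude $|Y_i-Y_{i-1}|\le c$ bound).
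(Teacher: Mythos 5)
The paper does not prove Lemma~\ref{lem:McD}; it is cited verbatim as Lemma~6.1 of McDiarmid's survey and used as a black box, so there is no in-paper proof to compare against. Your sketch is the standard self-contained argument: a Doob martingale $Y_i=\E[f(A)\mid X_1,\dots,X_i]$ over the elements of a without-replacement sample, a coupling showing that, conditionally on the past, $Y_i$ lives in an interval of length at most $c$ (rather than the cruder pointwise bound $|Y_i-Y_{i-1}|\le c$, which would only yield $2\exp(-\alpha^2/2)$), and then the Hoeffding (one-sided-interval) form of Azuma's inequality, which together give the stated $2\exp(-2\alpha^2)$. You have also correctly flagged the one delicate step: the naive coupling that reuses the same suffix after substituting $X_i=x$ versus $X_i=x'$ does not preserve the marginal of $A$, and must be repaired either by a swap (exchange $x$ for $x'$ when $x'$ lands in the suffix) or, equivalently, by working with a uniform permutation of $U$ and comparing adjacent transpositions. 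With that in place the plan is sound and reproduces the lemma.
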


Before proving Lemma~\ref{lem:random}, we prove the following consequence of the McDiarmid inequality. 

\begin{lemma}\label{lem:McDAppl}
Let $k,\ell \in \mathbb{N}$, $0<\delta' <\delta< 1$ and $1/n, 1/\ell \ll 1/k, \delta-\delta'$.
Let $H$ be a $k$-uniform $n$-vertex hypergraph with vertex set $V$ and suppose that $\deg(S,V) \geq \delta n^{k-d}$ for each $S \in \binom{V}{d}$. Let $A\subseteq V$ be a vertex set of size $\ell$ chosen uniformly at random.
Then, for every $T \in \binom{V}{d}$ we have 

\begin{equation*}
\mathbb{P}\left[\deg(T,A) < \delta' \ell^{k-d}\right] \leq 2 \exp (-\ell(\delta - \delta')^2/2).
\end{equation*}
\end{lemma}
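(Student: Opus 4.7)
The plan is to apply McDiarmid's inequality (Lemma~\ref{lem:McD}) to the function $f(A)=\deg(T,A)$ defined on $\ell$-subsets $A$ of $V$, taking $U=V$ and $m=\ell$ in the notation of Lemma~\ref{lem:McD}. Two ingredients need to be verified: a lower bound on $\mathbb{E}[f(A)]$ and a bound on the Lipschitz constant.

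For the expectation, observe that for each $(k-d)$-subset $V'\subseteq V\setminus T$ with $T\cup V'\in E(H)$,
\[
\mathbb{P}[V'\subseteq A]\;=\;\frac{\binom{n-k+d}{\ell-k+d}}{\binom{n}{\ell}}\;=\;\prod_{i=0}^{k-d-1}\frac{\ell-i}{n-i}\;\geq\;(1-\xi)\left(\frac{\ell}{n}\right)^{k-d},
\]
where $\xi$ can be made arbitrarily small via the hierarchy $1/n,1/\ell\ll 1/k,\delta-\delta'$ (for instance, one checks $\xi\leq k^2/\ell$ via Bernoulli's inequality after bounding $(\ell-i)/(n-i)\geq (\ell-k)/n$). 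Choosing $\xi$ so that $\delta(1-\xi)\geq(\delta+\delta')/2$ and applying linearity of expectation together with $\deg(T,V)\geq\delta n^{k-d}$ yield $\mathbb{E}[f(A)]\geq\tfrac{\delta+\delta'}{2}\,\ell^{k-d}$.

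For the Lipschitz constant, if two $\ell$-subsets $A_1,A_2$ of $V$ differ only in vertices $v\in A_1\setminus A_2$ and $v'\in A_2\setminus A_1$, any $(k-d)$-set $V'$ contributing to $f(A_1)-f(A_2)$ must contain $v$ or $v'$, with its remaining $k-d-1$ vertices drawn from $A_1\cap A_2$. Hence
\[
|f(A_1)-f(A_2)|\;\leq\;\binom{\ell-1}{k-d-1}\;\leq\;\ell^{k-d-1}\;=:\;c.
\]
Setting $\delta''=(\delta+\delta')/2$ and $\alpha=(\delta''-\delta')\sqrt{\ell}$ in Lemma~\ref{lem:McD}, one has $\alpha c\sqrt{\ell}=(\delta''-\delta')\ell^{k-d}$. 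Since $\mathbb{E}[f(A)]\geq\delta''\ell^{k-d}$, the event $\{f(A)<\delta'\ell^{k-d}\}$ is contained in $\{|f(A)-\mathbb{E}[f(A)]|\geq(\delta''-\delta')\ell^{k-d}\}$, whose probability Lemma~\ref{lem:McD} bounds by
\[
2\exp(-2\alpha^2)\;=\;2\exp\bigl(-(\delta-\delta')^2\ell/2\bigr),
\]
matching the statement.

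The work here is essentially bookkeeping; the only mildly delicate point is to place $\delta''$ exactly midway between $\delta$ and $\delta'$, so that the constant $1/2$ in the exponent of the bound comes out on the nose rather than a weaker constant.
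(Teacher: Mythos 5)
Your proof is correct and takes essentially the same route as the paper's: apply McDiarmid's inequality to $f(A)=\deg(T,A)$ with Lipschitz constant $\ell^{k-d-1}$, bound $\E[f(A)]\geq\tfrac{\delta+\delta'}{2}\ell^{k-d}$ via linearity of expectation and the hierarchy, and then pick the deviation parameter so the constant $1/2$ in the exponent lands exactly. The only cosmetic difference is that you compute the inclusion probability exactly as $\binom{n-k+d}{\ell-k+d}/\binom{n}{\ell}$ where the paper uses the slightly weaker lower bound $\binom{n-k}{\ell-k+d}/\binom{n}{\ell}$; both suffice.
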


\begin{proof}
Let $f:\mathcal{P}(V) \rightarrow \mathbb{R}$ be defined by $f(X)= \deg(T,X) $  for each $X \subseteq V$, and set $\eps = (\delta-\delta')/(2\delta) < 1/2$. Observe that $|f(U_1)-f(U_2)| \leq \ell^{k-d-1}$ for any $U_1,U_2 \in \mathcal{P}(V)$ with $|U_1|=|U_2|=\ell$ and $|U_1\cap U_2|=\ell-1$. Given an edge $e$ with $T\subseteq e$, the probability that $e\setminus T$ is contained in $A$ is at least $\frac{\binom{n-k}{\ell - k+d}}{\binom{n}{\ell}}\geq (1-\eps) \frac{\ell^{k-d}}{n^{k-d}}$, where we used $1/n, 1/\ell \ll 1/k, \delta-\delta'$. So by linearity of expectation we have $\mathbb{E}[f(A)]\geq \delta (1-\eps)\ell^{k-d}=\frac{\delta+\delta'}{2}\ell^{k-d}$. We can then apply Lemma~\ref{lem:McD} with $c=\ell^{k-d-1}$, $m=\ell$ and $\alpha=\sqrt{\ell}(\delta - \delta')/2$, and get that
$\mathbb{P}\left[f(A) < \delta'\ell^{k-d}\right] \leq 2 \exp(-\ell(\delta - \delta')^2/2)$, as desired. 
\end{proof}

We are now ready to prove Lemma~\ref{lem:random}.

\begin{proof}[Proof of Lemma~\ref{lem:random}]
  Pick a partition of $[n]$ into $V_1 \cup \cdots \cup V_t$ uniformly at random from all partitions which satisfy $|V_i| = n_i$ for all $i \in [t]$. Then by Lemma~\ref{lem:McDAppl}, we have the probability that there are $i \in [t]$, $j \in [m]$ and $S \in \binom{[n]}{d}$ such that $\deg_{H_j}(S,V_i) < (\delta + \alpha/2)n_i^{k-d}$ is at most $t \cdot m \cdot \binom{n}{d} \cdot 2 \cdot  \exp(-\alpha^2 \beta n/8)=o(1)$, where we have used that $n_i \geq \beta n$ for each $i \in [t]$, $1/n \ll \alpha, \beta$, and $m\leq n^2 $. Therefore there exists a partition with the desired properties.
\end{proof}

Finally, we state a lemma from~\cite{alp_yani_richard}, which we are going to use while proving a colour-absorption type statement.
\begin{lemma}[Lemma $3.3$ in~\cite{alp_yani_richard}]
\label{lem:absorb} Let $\alpha\in (0,1)$ and let $\ell, m, n \geq 1$ be integers satisfying $\ell\leq \alpha^7m/10^5$ and $\alpha^2n\geq 8m$. Let $K$ be a bipartite graph on vertex classes $A$ and $B$ such that $|A|=m$, $|B|=n$ and, for each $v\in A$, $\deg_K(v)\geq \alpha n$.

Then, there are disjoint subsets $B_0,B_1\subset B$ with $|B_0|=m-\ell$ and $|B_1|\geq \alpha^7n/10^5$, and the following property. Given any set $U\subset B_1$ of size $\ell$, there is a perfect matching between $A$ and $B_0\cup U$ in $K$.
\end{lemma}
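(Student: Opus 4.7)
The plan is to construct $B_0$ and $B_1$ in two stages: first sample a random reservoir $B_1$ of the desired size, then build $B_0$ together with an absorbing template that can handle any $\ell$-subset of $B_1$.

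For the reservoir, observe by double counting that the set $B_{\mathrm{hi}}=\{v\in B:d_K(v)\geq \alpha m/2\}$ has size at least $\alpha n/2$. Take $B_1$ to be a uniformly random subset of $B_{\mathrm{hi}}$ of size $\alpha^7 n/10^5$. A standard Chernoff bound then ensures that with positive probability every $v \in A$ satisfies $d_K(v,B_1)=\Theta(\alpha|B_1|)$, while each $v\in B_1$ retains its high degree $d_K(v)\geq \alpha m/2$ to $A$. Both bounds will be used in the next step.

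The heart of the argument is embedding a Montgomery-style robust template inside $K$: a bipartite graph $T$ on vertex classes $X \subseteq A$ and $Y = Y_0 \cup B_1$, with $Y_0 \subseteq B\setminus B_1$ and $|X|=|Y_0|+\ell$, such that for every $U \subseteq B_1$ of size $\ell$ the subgraph $T[X, Y_0 \cup U]$ contains a perfect matching. Such templates exist as sparse random bipartite graphs of bounded maximum degree, and they embed into $K$ using the strong minimum-degree bounds established above together with iterative applications of Hall's theorem. Once $T$ is realised, the remaining vertices $A \setminus X$ must be matched into $B \setminus (Y_0 \cup B_1)$; the minimum degree from $A$ into this set is at least $\alpha n - |B_1| - |Y_0| \geq \alpha n / 2$, which dominates $|A \setminus X| \leq m \leq \alpha n / 8$ by the hypothesis $\alpha^2 n \geq 8m$, so Hall's theorem yields a matching $M$ between $A \setminus X$ and some $B_0' \subseteq B \setminus (Y_0 \cup B_1)$ of size $|A\setminus X|$. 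Set $B_0 := B_0' \cup Y_0$, so $|B_0|=m-\ell$. For any $U \subseteq B_1$ of size $\ell$, concatenating $M$ with the perfect matching in $T[X, Y_0 \cup U]$ guaranteed by the template yields the desired perfect matching between $A$ and $B_0 \cup U$.

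The main obstacle is the construction and embedding of the robust template. The specific exponent $\alpha^7$ and the factor $10^{-5}$ in both the hypothesis on $\ell$ and the lower bound on $|B_1|$ arise from the need to balance the template's bounded maximum degree (so that it embeds inside $K$ via the $\alpha n$ and $\alpha m/2$ degree lower bounds) against the expansion properties required to survive a union bound over the $\binom{|B_1|}{\ell}$ possible choices of $U$. This is precisely the balance achieved in Montgomery's original template argument, via suitably tuned random bipartite subgraphs of constant regularity.
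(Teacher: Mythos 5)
The paper does not prove Lemma~\ref{lem:absorb}; it imports it as a black box from~\cite{alp_yani_richard}, so there is no in-paper argument to compare against. Your proposal therefore has to stand on its own, and your overall plan (a random reservoir $B_1$, a robustly-matchable template $T$ covering $B_1$, and a Hall completion matching $A\setminus X$ into $B\setminus(Y_0\cup B_1)$) is in the right spirit and the final concatenation step is correct.

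There is, however, a genuine error in the reservoir step. You assert that a Chernoff bound yields $d_K(v,B_1)=\Theta(\alpha|B_1|)$ for \emph{every} $v\in A$. This cannot follow from the hypotheses: you only have $d_K(v)\geq\alpha n$ into all of $B$, and $B_{\mathrm{hi}}$ is only guaranteed to satisfy $|B_{\mathrm{hi}}|\geq\alpha n/2$, so a vertex $v\in A$ can have all of its neighbourhood inside $B\setminus B_{\mathrm{hi}}$ and hence $d_K(v,B_{\mathrm{hi}})=0$. Concretely, take $\alpha\leq 1/2$, split $B=B_1'\cup B_2$ with $|B_1'|=\alpha n$, split $A=A_1\cup A_2$ with $|A_1|=\alpha m/4$, and let $K$ be the union of the complete bipartite graphs between $A_1,B_1'$ and between $A_2,B_2$. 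Then every $v\in A$ has degree at least $\alpha n$, but $B_{\mathrm{hi}}=B_2$ and every $v\in A_1$ has no neighbour in $B_{\mathrm{hi}}$. (The conclusion of the lemma still holds in this example, but your construction does not reach it.) Concentration cannot conjure up degree that is not present on average, so the ``strong minimum-degree bounds established above'' that you plan to use for the template embedding are simply not true.

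This is load-bearing: the template has edges from $X\subseteq A$ into $B_1$, and realising those edges inside $K$ is exactly where the bound $d_K(v,B_1)=\Theta(\alpha|B_1|)$ would be needed. If you want to salvage this route you must at minimum restrict $X$ to those $A$-vertices that do have substantial degree into $B_{\mathrm{hi}}$ (a short averaging argument as above shows $\Omega(\alpha^2 m)$ of them exist), and then carry out, rather than gesture at, the Hall-type embedding of the bounded-degree template into the adversarial graph $K$ --- which is nontrivial because the common neighbourhood in $A$ of a constant number of $B$-vertices can be empty, and because $|B_1|$ may exceed $|A|=m$, so the template cannot simply identify $Z$ with $B_1$ while keeping $|X|\le m$. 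Those are the steps where the exponent $\alpha^7$ and the constant $10^{-5}$ must actually be produced, and at present the proposal neither justifies them nor provides an embedding argument that would survive the example above.
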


\subsection{Proof overview}
As previously mentioned, the framework of the proof of our main result borrows a lot from the work of Montgomery, Müyesser, and Pehova \cite{alp_yani_richard}, and we highly recommend the reader to inspect the proof sketch given there, especially for embedding trees. We will now attempt to give a self-contained account of the main ideas of our proof strategy. For the purposes of the proof sketch, it will be conceptually (and notationally) simpler to imagine that we are trying to prove that the family of (2-uniform) Hamilton cycles is colour-blind. Observe that a Hamilton cycle is an $\cA$-cycle with $\cA$ being an edge. 
\begin{proposition}[Theorem~2 in~\cite{cheng}]\label{prop:sketcheasy}
    For any $\alpha >0$, there exists $n_0 \in \mathbb{N}$ such that the following holds.
    Let $\fG$ be a graph collection on vertex set $[n]$ with $|\fG|=n$ and $\delta(\fG) \ge (1/2 + \alpha)n$.
    Then $\fG$ contains a transversal copy of a Hamilton cycle.
\end{proposition}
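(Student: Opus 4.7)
The plan is to follow the five-step absorption proof of Dirac's theorem sketched in the introduction, while maintaining the rainbow condition throughout. The essential new ingredient is a \emph{colour-flexible absorber} capable of swallowing both a small residue of leftover vertices and a small residue of unused colours at the end.

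First I would set aside resources. Using Lemma~\ref{lem:random}, I randomly partition the vertex set $[n]$ into a reservoir $V_R$ of size $\beta n$ (with $\beta \ll \alpha$) and a main set $V_M$, so that each $G_i$ keeps minimum degree $(1/2 + \alpha/2)|V_R|$ into $V_R$. Independently, I partition the colour set $[n]$ into $C_A \cup C_C \cup C_M$ with $|C_A|, |C_C| = \Theta(\beta n)$; the colours in $C_A$ and $C_C$ will be reserved for the absorber and the connecting paths, while $C_M$ is used for the bulk. Passing via Proposition~\ref{prop:alsodirac} to the graph $\cK$ of edges appearing in at least half of the collection, the uncoloured Dirac absorption argument inside $\cK[V_M]$ produces a small set $A \subset V_M$ with distinguished endpoints $v, w \in A$, such that $A$ absorbs any small, correct-parity $L \subset V_M \setminus A$ via a Hamilton $v$--$w$-path of $\cK[A \cup L]$. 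To make this absorber colour-flexible, I would invoke Lemma~\ref{lem:absorb} on the auxiliary bipartite graph whose vertex classes are the edges of a fixed candidate Hamilton path through $A$ on one side and the colours $C_A$ on the other, with incidences corresponding to ``the given edge lies in $G_i$''. The lemma outputs a pre-rainbow-colouring of the absorber together with a reservoir of swappable colours from $C_A$, guaranteeing that any small residual subset of $C_A$ can be substituted into the final colouring.

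After constructing the absorber (and repeating the analogous colour-absorption construction with $C_C$ for the forthcoming short connecting paths), I would cover almost all of $V_M \setminus A$ by a small number of long vertex-disjoint rainbow paths using only colours from $C_M$; this is straightforward via greedy extension using the Dirac condition on $V_M$ together with Observation~\ref{obs:mindegreecounting} to ensure the relevant minimum degrees remain above $n/2$ as colours and vertices are used. Next, I would connect the ends of these paths (and link them to $v$ and $w$) by short rainbow paths through $V_R$ using colours from $C_C$; each connection is possible because any two specified vertices share many common neighbours in $V_R$ within each $G_i$. Finally, I would invoke the absorber: the uncovered vertices $L \subset V_M \setminus A$ together with $A$ form a Hamilton $v$--$w$-path of $\cK[A \cup L]$, and the small residues of colours remaining in $C_A \cup C_C$ are absorbed through the pre-arranged swaps, closing up a transversal Hamilton cycle. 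The main obstacle is precisely the colour-flexibility of the absorber: at the moment the absorber is built, the identity of the leftover colours is unknown, so the colouring must carry enough built-in slack to accommodate any small residue; this is what Lemma~\ref{lem:absorb} is engineered to provide, and its correct deployment is the central technical point.
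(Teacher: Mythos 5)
You correctly identify Lemma~\ref{lem:absorb} as the key engine, and the spirit of your plan (separate vertex and colour reservoirs, passing to the ``popular edge'' auxiliary graph via Proposition~\ref{prop:alsodirac}, connecting long rainbow paths through a reserved random set) matches the paper's sketch. However, there is a genuine gap in how you attach colour-flexibility to the absorber. You want to apply Lemma~\ref{lem:absorb} with the left class being ``the edges of a fixed candidate Hamilton path through $A$'', so that the vertex absorber itself becomes colour-flexible. But the vertex absorber works by producing \emph{different} Hamilton $v$--$w$-paths $P_L$ of $\cK[A \cup L]$ depending on which leftover set $L$ is eventually encountered; the edge set of $P_L$ varies with $L$, and $L$ is not determined until after the bulk of the cycle has been built. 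Lemma~\ref{lem:absorb} manufactures flexibility only over the colour side of the bipartite graph --- the class of objects to be coloured must be fixed in advance. Once the actual $L$ appears, $P_L$ contains edges not present in your ``fixed candidate path'', and the pre-arranged swap structure says nothing about them. A single object cannot simultaneously serve as the vertex absorber (whose rerouted edges vary with $L$) and the colour absorber (whose edge set must be fixed); this is precisely why the paper decouples them. The paper's sketch first builds a \emph{separate} colour-absorbing path $\mathcal{S}$ via Lemma~\ref{lem:colourabsorb:general} (which is the packaged form of the Lemma~\ref{lem:absorb} application you had in mind, applied to the fixed $\beta n$ edges of $\mathcal{S}$), then places the vertex absorber inside Proposition~\ref{prop:sketcheasier}, colours the final absorber path $P_L$ \emph{greedily} using a reserved pool $C_1$ in which every edge of the auxiliary graph has $\Omega(n)$ available colours, and only at the very last step colours $\mathcal{S}$ from $A$ together with whatever $\gamma n$-subset of $C$ happens to be left over.

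A secondary issue: after covering almost all of $V_M \setminus A$ by long rainbow paths coloured from $C_M$, you will in general be left with an $o(n)$-sized residue of unused colours from $C_M$, not merely from $C_A \cup C_C$, and your absorber as designed does not see $C_M$ at all. The paper addresses this inside Proposition~\ref{prop:sketcheasier} (the requirement of ``using every colour $G_i$ with $i \in [(1-\zeta)n]$''), which in the proof of Theorem~\ref{thm:main} is realised by the explicit colour-exhausting Step~6: a separate random reservoir $R_2$ is set aside precisely so that one extra edge per leftover $C_M$-colour can be placed there, rebalancing the vertex and colour counts before the vertex absorber and the colour absorber are finally invoked. You would need an analogous mechanism.
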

\subsubsection{Colour absorption}
\par The basic premise of our approach, which is shared with \cite{alp_yani_richard}, is that Proposition~\ref{prop:sketcheasy} becomes significantly easier to prove if we assume that $|\fG|=(1+o(1))n$, that is, if we have a bit more colours than we need to find a rainbow Hamilton cycle on $n$ vertices\footnote{More intuition for why surplus colours are helpful is given in the proof sketch from \cite{alp_yani_richard}}. Thus, the starting goal of the proof is to somehow simulate having access to more colours than we need, while still starting with a graph collection of size exactly $n$. The way we achieve this is through the following lemma, which follows in a long tradition of absorption based ideas.

\begin{lemma}\label{lem:colourabsorb:general}
Let $d,k,n \in \mathbb{N}$, $1/n\ll \gamma \ll \beta \ll \alpha$ and $\delta\geq 0$. Let $\cF$ be a $k$-uniform hypergraph with $e(\cF)= \beta n$ and suppose that any $n$-vertex $k$-uniform hypergraph with minimum $d$-degree at least $\delta n^{k-d}$ contains a copy of $\cF$.
Let $\fH = (H_1, \ldots, H_m)$ be a $k$-uniform hypergraph collection on $[n]$ with $\delta_d(\fH)\geq (\delta+\alpha)n^{k-d}$ and $|\fH|=m$ with $m \geq \alpha n$.

Then, there is an uncoloured copy $\cS$ of $\cF$ in $\cup_{i\in [m]} H_i$ and disjoint sets $A,C\subset [m]$, with $|A|=e(\cF)-\gamma n$ and $|C|\geq 10\beta m$ such that the following property holds.
Given any subset $B\subset C$ with $|B|=\gamma n$, there is a rainbow colouring of $\cS$ in $\fH$ using colours in $A\cup B$.
\end{lemma}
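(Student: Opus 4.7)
The plan is to reduce the lemma to a robust bipartite matching statement and then apply Lemma~\ref{lem:absorb}. First, I will fix a single uncoloured copy $\mathcal{S}$ of $\mathcal{F}$ inside $\bigcup_{i\in[m]} H_i$ in such a way that every edge of $\mathcal{S}$ belongs to a linear number of colour classes. To this end, set $\alpha_1 := \alpha/2$ and let $\mathcal{K}$ be the $k$-uniform hypergraph on $[n]$ whose edges are those $e$ lying in at least $\alpha_1 m$ of the $H_i$'s. Proposition~\ref{prop:alsodirac} yields $\delta_d(\mathcal{K}) \geq (\delta + \alpha/2) n^{k-d}$, so by the embedding hypothesis on $\mathcal{F}$, $\mathcal{K}$ contains a copy $\mathcal{S}$ of $\mathcal{F}$, and every one of the $\beta n$ edges of $\mathcal{S}$ lies in at least $\alpha_1 m$ of the classes $H_i$.

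Next, I will form the auxiliary bipartite graph $K$ on parts $E(\mathcal{S})$ and $[m]$, placing an edge between $e$ and $i$ exactly when $e \in E(H_i)$. Any rainbow colouring of $\mathcal{S}$ using a subset $S\subseteq[m]$ of size $\beta n$ corresponds to a perfect matching in $K$ between $E(\mathcal{S})$ and $S$, and by construction every vertex on the $E(\mathcal{S})$-side has degree at least $\alpha_1 m$ in $K$. I then apply Lemma~\ref{lem:absorb} to $K$, taking $E(\mathcal{S})$ (of size $\beta n$) as the small side, $[m]$ as the large side, with the lemma's parameter equal to $\alpha_1$ and $\ell := \gamma n$. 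Its two numerical hypotheses, $\gamma n \leq \alpha_1^7 \beta n / 10^5$ and $\alpha_1^2 m \geq 8\beta n$, both follow from the hierarchy $1/n \ll \gamma \ll \beta \ll \alpha$ together with $m \geq \alpha n$.

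Lemma~\ref{lem:absorb} then supplies disjoint subsets $B_0, B_1 \subseteq [m]$ with $|B_0| = \beta n - \gamma n$ and $|B_1| \geq \alpha_1^7 m / 10^5 \geq 10\beta m$ (the last inequality again using $\beta \ll \alpha$), such that for every $U \subseteq B_1$ with $|U| = \gamma n$ there is a perfect matching in $K$ between $E(\mathcal{S})$ and $B_0 \cup U$. Setting $A := B_0$ and $C := B_1$, such a perfect matching is precisely a rainbow colouring of $\mathcal{S}$ using the colour set $A \cup U$, which is exactly the property required by the lemma. The one genuinely substantive step is the reduction in the first paragraph: replacing the whole collection by the dense popular subhypergraph $\mathcal{K}$ ensures that a single fixed template $\mathcal{S}$ can be chosen with many colour options per edge; once this is done, Lemma~\ref{lem:absorb} performs the colour absorption mechanically, and verifying its hypotheses is only routine bookkeeping inside the prescribed hierarchy.
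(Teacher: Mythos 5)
Your proof is correct and follows essentially the same route as the paper: define the auxiliary "popular-edge" hypergraph via Proposition~\ref{prop:alsodirac}, embed $\cF$ there, form the bipartite incidence graph between $E(\cS)$ and $[m]$, and apply Lemma~\ref{lem:absorb}. The only cosmetic difference is that you use threshold $\alpha m/2$ rather than $\alpha m$ for popularity, which just gives a slightly safer degree bound for $\cK$; the hypotheses and conclusion line up in exactly the same way.
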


We remark that Lemma~\ref{lem:colourabsorb:general} is the hypergraph analogue of Lemma $3.4$ from \cite{alp_yani_richard}. For the sake of completeness, we give its proof below.

\begin{proof}[Proof of Lemma~\ref{lem:colourabsorb:general}]
Let $\cH$ be the $k$-uniform hypergraph with vertex set $[n]$, where $e$ is an edge of $\cH$ exactly when $e\in E(H_i)$ for at least $\alpha m$ values of $i\in [m]$. Then, by Proposition~\ref{prop:alsodirac}, $\delta_d(\cH)\geq \delta n^{k-d}$ and, therefore, $\cH$ contains a copy of $\cF$, which we denote by $\cS$.
Observe that $\cS$ is an uncoloured copy of $\cF$ in $\cup_{i \in [m]} H_i$.

Let $K$ be the bipartite graph with vertex classes $E(\cS)$ and $[m]$, where $ei$ is an edge of $K$ exactly if $e\in H_i$. 
Note that, since each $e\in E(\cS)$ is also an edge of $\cH$, we have that $\deg_K(e)\geq \alpha m$. Then, as $\gamma\ll \beta\ll\alpha$, by Lemma~\ref{lem:absorb} with $\ell=\gamma n$, $m=\beta n$, $n=m$, there are disjoint sets $A,C\subset [m]$ with $|A|=e(\cF)-\gamma n$ and $|C|\geq 10\beta m$, such that, for any set $B\subset C$ of size $\gamma n$ there is a perfect matching between $E(\cS)$ and $A\cup B$. Note that for such a matching $M$, the function $\phi:E(\cS)\to A\cup B$, defined by $e\phi(e)\in M$ for each $e\in E(\cS)$, gives a rainbow colouring of $\cS$ in $\fH$ using colours in~$A\cup B$, as required.
\end{proof}

\subsubsection{Completing the cycle}
Lemma~\ref{lem:colourabsorb:general} provides us with a lot of flexibility, by finding a small subgraph that admits a rainbow colouring in many different ways. To prove Proposition~\ref{prop:sketcheasy}, we will also need the following proposition.
\begin{proposition}\label{prop:sketcheasier}
    Let $1/n\ll \zeta \ll \kappa, \alpha$. Let $\fG$ be a graph collection on $[n]$ with $|\fG|=(1+\kappa-\zeta)n$ and $\delta(\fG)\geq (1/2+\alpha)n$. Let $a,b\in [n]$ be distinct vertices. Then, $\fG$ contains a rainbow Hamilton path with $a$ and $b$ as its endpoints, using every colour $G_i$ with $i\in [(1-\zeta)n]$.
\end{proposition}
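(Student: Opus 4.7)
The plan is to apply the absorption method in the rainbow setting, taking advantage of the large pool of optional colours. Partition the palette into \emph{mandatory} colours $M := [(1-\zeta)n]$ and \emph{optional} colours $O := \{(1-\zeta)n+1,\ldots,(1+\kappa-\zeta)n\}$ of size $\kappa n$. A Hamilton path on $n$ vertices has $n-1$ edges, so any transversal must use every colour in $M$ together with exactly $\zeta n-1$ colours from $O$. Since $|O|=\kappa n \gg \zeta n$, we have generous slack among the optional colours. Fix a hierarchy $1/n \ll \gamma \ll \beta \ll \zeta$ and apply Lemma~\ref{lem:colourabsorb:general} to the subcollection $\fG_O = (G_i)_{i\in O}$ restricted to $[n]\setminus\{a,b\}$, taking $\cF$ to be a path on $\beta n$ edges. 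This produces a rainbow path $\cS$ with endpoints $u,v$ using only optional colours, together with disjoint sets $A,C \subseteq O$ of sizes $|A|=\beta n - \gamma n$ and $|C| \geq 10\beta\kappa n$, such that for every $B \subseteq C$ of size $\gamma n$, the path $\cS$ admits a rainbow colouring using exactly $A \cup B$. This is our \emph{colour absorber}: any $\gamma n$ optional colours from $C$ that turn out to be unused by the remainder of the path can be absorbed into $\cS$ at the end.

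The main task is then to find two vertex-disjoint rainbow paths $P_1$ from $a$ to $u$ and $P_2$ from $v$ to $b$ covering $V' := [n]\setminus (V(\cS)\setminus\{u,v\})$, whose combined edges use every colour in $M$, a prescribed subset $A' \subseteq O \setminus (A\cup C)$ of the appropriate size, and exactly $|C|-\gamma n$ colours of $C$. To do this I would follow the standard absorption template in the graph induced on $V'$ (which still satisfies the min-degree condition by Observation~\ref{obs:mindegreecounting}): first, apply Lemma~\ref{lem:random} to reserve a small vertex reservoir $R \subseteq V'\setminus\{a,b,u,v\}$ with balanced mandatory degrees; next, use Proposition~\ref{prop:alsodirac} applied to $\fG_M = (G_i)_{i\in M}$ to obtain a dense ``majority'' graph $\cK$ on $V'$ and, inside $\cK$, place a standard vertex absorber via swappable pairs; then cover almost all of $V'\setminus R$ by a vertex-disjoint union of long rainbow paths in mandatory colours; and finally stitch these paths together using $R$, the absorber, and the optional colours of $A'$ together with a flexible subset of $C$. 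Once $P_1$ and $P_2$ are built, define $B$ to be the $\gamma n$ colours of $C$ that $P_1 \cup P_2$ did not use, colour $\cS$ via Lemma~\ref{lem:colourabsorb:general}, and concatenate $P_1$, $\cS$, $P_2$ to obtain the required Hamilton $a$--$b$ path.

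The main obstacle is the construction of the pair $P_1,P_2$: this is itself a transversal Dirac-type problem with four prescribed endpoints and asymmetric colour constraints. The key feature making it tractable is the generous slack in the palette --- the reservoir $R$ together with the flexibility afforded by $C$ decouples the mandatory colours from the vertex structure, so that the standard swappable-pair vertex absorber inside $\cK$ combined with a greedy construction of long mandatory-coloured paths (where concentration comes from Lemma~\ref{lem:random}) suffices. The bookkeeping required to track which optional colours are reserved for $\cS$ versus used in the main path is delicate but mechanical, and relies only on tools developed in Section~\ref{sec:overview}.
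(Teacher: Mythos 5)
Your proposal recovers the high-level shape of the argument, but it contains one genuine gap and one structural misconfiguration, both of which the paper's proof (embedded in the proof of Theorem~\ref{thm:main}) is careful to handle.

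\textbf{Gap: exhausting the leftover mandatory colours.} After tiling most of $V'\setminus R$ by long rainbow paths, a small but nonzero set of colours from $M=[(1-\zeta)n]$ will remain unused (a version of Lemma~\ref{lem:rainbow_paths_tiling} uses \emph{most} colours from a palette, never all of them). These colours must still appear on the final path, but nothing in your plan picks them up. The vertex absorber absorbs vertices, not mandatory colours; your stitching and connector steps draw on $A'$ and $C$; and the colour absorber $\cS$ can only absorb leftover colours from within $C\subseteq O$, never from $M$. The paper's proof has a dedicated step for exactly this (Step~6, ``Exhaust $C_0$ inside $R_2$''): after the tiling, a separate reserved random vertex set $R_2$ is used to host a greedily constructed rainbow matching in exactly the so-far-unused $M$-colours, with $|R_2|$ chosen precisely to make the vertex/colour counts balance. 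This is the role of property $\textbf{Fac}$ in the general theorem, trivial here because $\cA$ is an edge (Observation~\ref{obs:fac}), but it still requires a pre-sized reservoir to live in. The paper explicitly flags this as the main difficulty of Proposition~\ref{prop:sketcheasier} in its proof overview; your scheme does not address it.

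\textbf{Misconfiguration: which palette to build the majority graph from.} You form $\cK$ by applying Proposition~\ref{prop:alsodirac} to $\fG_M$ and place the vertex absorber and connection paths inside $\cK$. But being in this $\cK$ means an edge appears in $\Omega(|M|)$ colours of $M$ --- which says nothing about it being available in colours of $A'$ or $C$, which is what you later want to colour those edges with. Conversely, if you did colour them from $M$ you would have to hit precisely the leftover $M$-colours, which the majority property does not give you, and would also need the number of such edges to match $|M|$ minus whatever the tiling used, which it will not. The paper avoids this by building the majority graphs $\cK_1,\cK_2$ from a reserved chunk $C_1\subseteq C$ of \emph{slack} colours, precisely so that the absorber and connector edges can be greedily coloured from a pool with built-in flexibility. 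In your setting you should carve the majority graph out of the surplus palette $O$ (in fact the $\kappa n$ slack colours already play the role of $C$, so your internal re-application of Lemma~\ref{lem:colourabsorb:general} inside Proposition~\ref{prop:sketcheasier} is not needed). Once both issues are repaired --- majority graph built from $O$, and a dedicated reservoir $R_2$ with a greedy matching to soak up leftover $M$-colours --- your outline lines up with the paper's Steps~2--9.
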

Proposition~\ref{prop:sketcheasier}, in combination with Lemma~\ref{lem:colourabsorb:general}, gives a proof of Proposition~\ref{prop:sketcheasy}. 
\begin{proof}[Sketch of Proposition~\ref{prop:sketcheasy}] Let $\mathcal{C}$ denote the set of the $n$ colours. Apply Lemma~\ref{lem:colourabsorb:general} with $\mathcal{F}$ being a path of length $\beta n$ (and some constant $\gamma\ll \beta \ll \alpha$).
This gives a path $\mathcal{S}$ in $\fG$ and colour sets $A$ and $C$ (and we can fix $C$ to be a subset of the original set of size $\rho n$ with $\gamma \ll \rho \ll \beta$). Let $a$ and $b$ be the endpoints of $\mathcal{S}$. Set $\fG'$ to be the graph collection obtained by restricting $\fG$ to the vertex set $([n] \setminus V(\mathcal{S})) \cup \{a,b\}$ and colour set $\mathcal{C}\setminus A$.
Apply Proposition~\ref{prop:sketcheasier}, labelling the colours in $\fG'$ so that the first $(1-\zeta)n$ colours correspond to those in $\mathcal{C}\setminus (A \cup C)$. This way, we extend $\mathcal{S}$ to a Hamilton cycle $\mathcal{H}$. While the edges in $\cS$ are still uncoloured, those in $\mathcal{H}\setminus \mathcal{S}$ have been assigned a colour set using all colours in $\mathcal{C}\setminus (A \cup C)$ and exactly $|C|-\gamma n$ colours from $C$. Using the absorption property of $\mathcal{S}$, the path $\cS$ can be given a colouring using all the colours in $A$ and the remainder colours in $\mathcal{C}$, thereby giving $\mathcal{H}$ a rainbow colouring, as desired.
\end{proof}
\par Unfortunately, due to the technicalities present in the statement, Proposition~\ref{prop:sketcheasier} is far from trivial to show. Most of the novelty in the proof of our main theorem is the way we approach Proposition~\ref{prop:sketcheasier} for arbitrary $\mathcal{A}$-chains satisfying $\textbf{Ab}$, $\textbf{Con}$, and $\textbf{Fac}$. We now proceed to explain briefly how we achieve this, and how the three properties come in handy. 
\par Firstly, in the setting of Proposition~\ref{prop:sketcheasier}, it is quite easy to find a few rainbow paths using most of the colours from the set $[(1-\zeta)n]$. Below is a formal statement of a version of this for arbitrary $\mathcal{A}$-chains, where we remark that $\left(\frac{e(\mathcal{A})-e(\mathcal{A}_s)}{m-\ell} \right)n$ is the number of edges of an $\cA$-cycle on $n$ vertices.
\begin{lemma}
\label{lem:rainbow_paths_tiling}
Let $1/n \ll 1/T \ll \omega, \alpha, 1/m$. Let $\mathcal{A}$ be an $\ell$-link of order $m$ and uniformity $k$, and $d\in [k-1]$. Let $\delta$ be the minimum $d$-degree threshold for the containment of a Hamilton $\mathcal{A}$-cycle. 
Let $\fH$ be a $k$-uniform hypergraph collection on $[n]$ with $\delta_d(\fH)\geq (\delta+\alpha)n^{k-d}$, and suppose that $|\fH|\geq \left(\frac{e(\mathcal{A})-e(\mathcal{A}_s)}{m-\ell} \right)n$.
Then $\fH$ contains a rainbow collection of $T$-many pairwise vertex-disjoint $\mathcal{A}$-chains covering all but at most $\omega n$ vertices of $\fH$. 
\end{lemma}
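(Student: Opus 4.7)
The plan is to construct the $T$ chains iteratively on a balanced vertex partition, using a fresh portion of the colour set for each chain, and to rely on the uncoloured Hamilton $\cA$-cycle threshold to produce each chain inside a ``majority'' hypergraph extracted via Proposition~\ref{prop:alsodirac}. I first apply Lemma~\ref{lem:random} (possible since $1/n \ll 1/T$) to partition $[n]$ into $V_1,\ldots,V_T$ with $|V_j| \in (m-\ell)\mathbb{N}$ and $|V_j| = n/T + O(1)$, such that every $d$-subset $S \in \binom{[n]}{d}$ has degree at least $(\delta+\alpha/2)|V_j|^{k-d}$ into $V_j$ in each $\cH_i$ of the collection.

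Iteratively for $j=1,\ldots,T$, let $B_j \subseteq [|\fH|]$ denote the colours not yet used (so $B_1 = [|\fH|]$). Applying Proposition~\ref{prop:alsodirac} to the sub-collection $(\cH_c[V_j])_{c \in B_j}$ with parameter $\alpha/4$ yields the majority hypergraph $K_j$ on $V_j$ whose edges are the $k$-sets lying in at least $\alpha|B_j|/4$ of the $\cH_c[V_j]$, $c \in B_j$; then $\delta_d(K_j) \geq (\delta+\alpha/4)|V_j|^{k-d}$. Because $\delta$ is the uncoloured Hamilton $\cA$-cycle threshold and $|V_j|$ is a large multiple of $m-\ell$, $K_j$ contains a Hamilton $\cA$-cycle, from which one cuts out an $\cA$-chain $P_j$ spanning at least $(1-\omega/2)|V_j|$ vertices of $V_j$; using $|\fH| \geq \frac{e(\cA)-e(\cA_s)}{m-\ell} n$, one gets $e(P_j) \leq (1-\omega/2)|\fH|/T + O(1)$.

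I then rainbow-colour $P_j$ with colours from $B_j$ via a matching argument: form the bipartite graph with vertex classes $E(P_j)$ and $B_j$, where $e \sim c$ iff $e \in \cH_c$. By the definition of $K_j$, every $e \in E(P_j)$ has degree at least $\alpha|B_j|/4$ in this graph. A simple induction gives $|B_j| \geq \omega|\fH|/4$ for all $j \leq T$: after $j-1$ iterations we have used at most $(j-1)\bigl((1-\omega/2)|\fH|/T + O(1)\bigr) \leq (1-\omega/2)|\fH| + O(T)$ colours, so $|B_j| \geq (\omega/2)|\fH| - O(T) \geq \omega|\fH|/4$ for $n$ large (since $|\fH| = \Omega(n)$). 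Strengthening the hierarchy to $1/T \leq \omega\alpha/32$, which is permitted by $1/T \ll \omega,\alpha$, we obtain $|E(P_j)| \leq \alpha|B_j|/4$, whereby Hall's condition becomes trivial: for any nonempty $S \subseteq E(P_j)$ and any $e \in S$, $|N(S)| \geq \deg(e) \geq \alpha|B_j|/4 \geq |E(P_j)| \geq |S|$. A matching saturating $E(P_j)$ supplies the rainbow colouring; I set aside the used colours and continue to step $j+1$.

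After $T$ iterations we obtain $T$ vertex-disjoint, pairwise colour-disjoint rainbow $\cA$-chains $P_1,\ldots,P_T$, covering at least $\sum_{j=1}^T (1-\omega/2)|V_j| - O(T) \geq (1-\omega)n$ vertices for $n$ large, as required. The only real obstacle is the rainbow colouring step, but the hierarchy $1/T \ll \omega,\alpha$ renders it essentially free: each chain $P_j$ has far fewer edges than the remaining colour pool $B_j$, so Hall's condition holds in the trivial ``small-$S$'' regime, avoiding the need for any delicate matching argument.
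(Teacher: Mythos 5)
Your proof is correct and follows essentially the same approach as the paper: random partition into $T$ linear-sized parts via Lemma~\ref{lem:random}, within each part pass to a ``majority'' hypergraph via Proposition~\ref{prop:alsodirac}, extract an $\cA$-chain from a Hamilton $\cA$-cycle in that majority hypergraph, and colour it from the remaining pool. The only (cosmetic) differences are that the paper carves out a separate spare part $V_{T+1}$ of size $\omega n/2$ and lets each chain span almost all of $V_i$, whereas you distribute the slack by truncating each chain to $\approx(1-\omega/2)|V_j|$ vertices inside parts of size $n/T$, and the paper colours greedily where you invoke Hall's theorem; note that to make your edge-count bound follow you should cut $P_j$ to span \emph{exactly} (not merely ``at least'') $\approx(1-\omega/2)|V_j|$ vertices.
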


\begin{proof}
Choose $\omega, T$ such that Lemma~\ref{lem:random} holds with $\beta=(1-\omega/2)/T$, and set $t=\left(\frac{e(\mathcal{A})-e(\mathcal{A}_s)}{m-\ell} \right)$.
Let $\fH$ be a $k$-uniform hypergraph collection on $[n]$ with $\delta_d(\cH) \ge (\delta+\alpha)n^{k-d}$ and $|\fH| \ge tn$.

By Lemma~\ref{lem:random} applied with $\beta=(1-\omega/2)/T$, there exists a partition of $[n]$ into $V_1, \dots, V_T, V_{T+1}$ with $|V_1|=\dots=|V_T|=(1-\omega/2)n/T$ and $|V_{T+1}|=\omega n/2$, such that for any $1 \le i \le T+1$ and any hypergraph $\cH$ of the collection $\fH$, it holds that $\delta_d(\cH[V_i]) \ge (\delta+\alpha/2) |V_i|^{k-d}$.
We claim that we can greedily cover all but at most $m \cdot T$ vertices of $V_1, \dots, V_T$ with a rainbow collection of $T$-many pairwise vertex-disjoint $\cA$-chains $\cA_1, \dots, \cA_T$, such that $\cA_i$ covers all but at most $m$ vertices of the set $V_i$ for each $i \in [T]$.
Suppose we were able to do so for the sets $V_1, \dots, V_i$ for some $1 \le i < T$.
Then the number of colours used so far is at most $i \cdot (t|V_1|)$ and thus there are at least $tn-(T-1)t|V_1| = tn\frac{(T-1)\omega+2}{2T}$ available colours.
Let $C$ be the set of such colours.
Observe that a rainbow $\cA$-chain covering the vertices of $V_{i+1}$ uses no more than $t (1-\omega) n/T = \eta |C|$ colours, where $\eta=\frac{2-\omega}{(T-1)\omega+2} \le \frac{\alpha}{4}$, where we used $1/T \ll \omega, \alpha$ for the last inequality.
Let $\cK$ be the $k$-uniform hypergraph with vertex set $V_{i+1}$, where $e$ is an edge of $\cK$ if $e \in E(H_i)$ for at least $\eta |C|$ colours $i \in C$.
Then by Proposition~\ref{prop:alsodirac}, we have $\delta_d(\cK) \ge (\delta+\alpha/2-\eta) |V_{i+1}|^{k-d} \ge (\delta + \alpha/4) |V_{i+1}|^{k-d}$, where we used $\eta \le \alpha/4$ for the last inequality.
Therefore $\cK$ contains a copy of a Hamilton $\cA$-cycle, which in turn contains an $\cA$-chain covering all but $m$ vertices of $V_{i+1}$.
Now we greedily assign colours from $C$ to this $\cA$-chain in a rainbow fashion.

This shows we can find a rainbow collection of $T$-many pairwise vertex-disjoint $\cA$-chains covering all but at most $m \cdot T + |V_{T+1}| \le \omega n$ vertices of $\fH$, as wanted.
\end{proof}

Although it is easy to use most of the colours coming from a colour set using the above result, a challenge in Proposition~\ref{prop:sketcheasier} is that we need to use all of the colours coming from the set $[(1-\zeta)n]$. As we are currently concerned with the case when $\mathcal{A}$ consists of a single edge, this will not be a major issue. Indeed, using the minimum degree condition on each of the colours, we can greedily find rainbow matchings using small colour subsets of $[(1-\zeta)n]$ (see Observation~\ref{obs:fac}). For arbitrary $\mathcal{A}$, we would like to proceed in the same way; however, say when $\mathcal{A}$ is a triangle, the situation becomes considerably more complicated. This is why the property \textbf{Fac} is built into the assumptions of the main theorem.  

\par Our ultimate goal is to build a single $\mathcal{A}$-chain connecting specific ends, not just a collection of $\mathcal{A}$-chains. Hence, we rely on the property \textbf{Con} to connect the ends of the paths we obtained via Lemma~\ref{lem:rainbow_paths_tiling} (as well as the greedy matching we found for the purpose of exhausting a specific colour set). An issue is that \textbf{Con} is an uncoloured property, whereas we would like to connect these ends in a rainbow manner. Here we rely on the trick offered by Proposition~\ref{prop:alsodirac}, which states that in hypergraph collections where each hypergraph has good minimum $d$-degree conditions, we can pass down to an auxiliary hypergraph $\mathcal{K}$ which also has good minimum $d$-degree conditions. An edge appears in $\mathcal{K}$ if and only if that edge has $\Omega(n)$ many colours in the original hypergraph collection. We can use the property \textbf{Con} on $\mathcal{K}$ to connect ends via short uncoloured paths, and later assign greedily one of the many available colours to the edges on this path.
\par As is the case with many absorption-based arguments, the short connecting paths we find will be contained in a pre-selected random set. After all the connections are made, there will remain many unused vertices inside this random set. To include these vertices inside a path, we use the property \textbf{Ab}. Similarly to \textbf{Con}, property \textbf{Ab} is an uncoloured property, but we can use again the trick of passing down to an appropriately chosen auxiliary graph $\mathcal{K}$.

\section{Proof of main theorem}
\label{sec:main_proof}

\begin{proof}[Proof of Theorem~\ref{thm:main}]
    Let $k,\ell,m \in \mathbb{N}$ with $\ell \le m$, $\mathcal{A}$ be an $\ell$-link of order $m$ and uniformity $k$, and $d \in [k-1]$.
    Let $\delta:=\delta(\mathcal{A},d)$ be the minimum $d$-degree threshold for the containment of a Hamilton $\mathcal{A}$-cycle, and suppose that $\mathcal{A}$ is $(\delta_0,d)$-good for some $\delta_0\geq \delta$.
    In the following, the constant implicit in any $O(\cdot)$ only depends on $\mathcal{A}$ and, therefore, can be bounded in terms of $m$.
    
    \textbf{Constants.}
    Let $\alpha>0$, let $c$ be given by $\mathbf{Con}$ with $\alpha/10$ and let $\beta_0$ be given by $\mathbf{Fac}$ applied with $\alpha/6$.
    Choose $\beta < \beta_0$ such that $0<\beta\ll \alpha,1/c,1/m$.
    Next choose $\rho$ and $\gamma$ such that $0<\gamma \ll \rho \ll \beta$ and the hierarchy in Lemma~\ref{lem:colourabsorb:general} is satisfied with $\gamma,\beta,\alpha$.
    Let $\tau$ and $\eta$ be given by $\mathbf{Ab}$ with $\alpha=\rho$, so that we have $0<\tau,\eta\leq \rho$.
    Now choose $T\in \mathbb{N}$ and $\omega, \nu>0$ with $ 1/T \ll \omega \ll \nu \ll \eta$ so that the hierarchy in Lemma~\ref{lem:rainbow_paths_tiling} is satisfied with $T,\omega,\alpha$.
    Finally, let $n \in (m-\ell) \mathbb{N}$ be such that $1/n \ll 1/T$ and $1/n \ll 1/n_0$ for any of the $n_0$ coming from the applications of $\mathbf{Con}$, $\mathbf{Fac}$ and $\mathbf{Ab}$ above.
    Without loss of generality we assume that $\beta n$ is an integer and that there exists an $\mathcal{A}$-chain on $\beta n$ edges.
    We summarise the dependency between the parameters as follows
    \[1/n \ll 1/T\ll \omega\ll  \nu \ll \eta , \tau, \gamma \ll \rho\ll \beta\ll \alpha,1/c,1/m \, . \]
    
    \textbf{Set-up.} 
    Let $\fH$ be a $k$-uniform hypergraph collection on vertex set $[n]$ with $|\fH|=\frac{e(\mathcal{A})-e(\mathcal{A}_s)}{m-\ell}n$ and $\delta_d(\fH)\geq (\delta+\alpha)n^{k-d}$.
    Set $t:=\frac{e(\mathcal{A})-e(\mathcal{A}_s)}{m-\ell}$ so that $tn=|\fH|$. We will use $[tn]$ to refer to our set of colours. Set $V:=V(\fH)$.
    
    For an easier navigation of the proof, the reader can refer to Figure~\ref{fig:proof}.
    \begin{figure}[h]
        \begin{center}
        \includegraphics[scale=0.49]{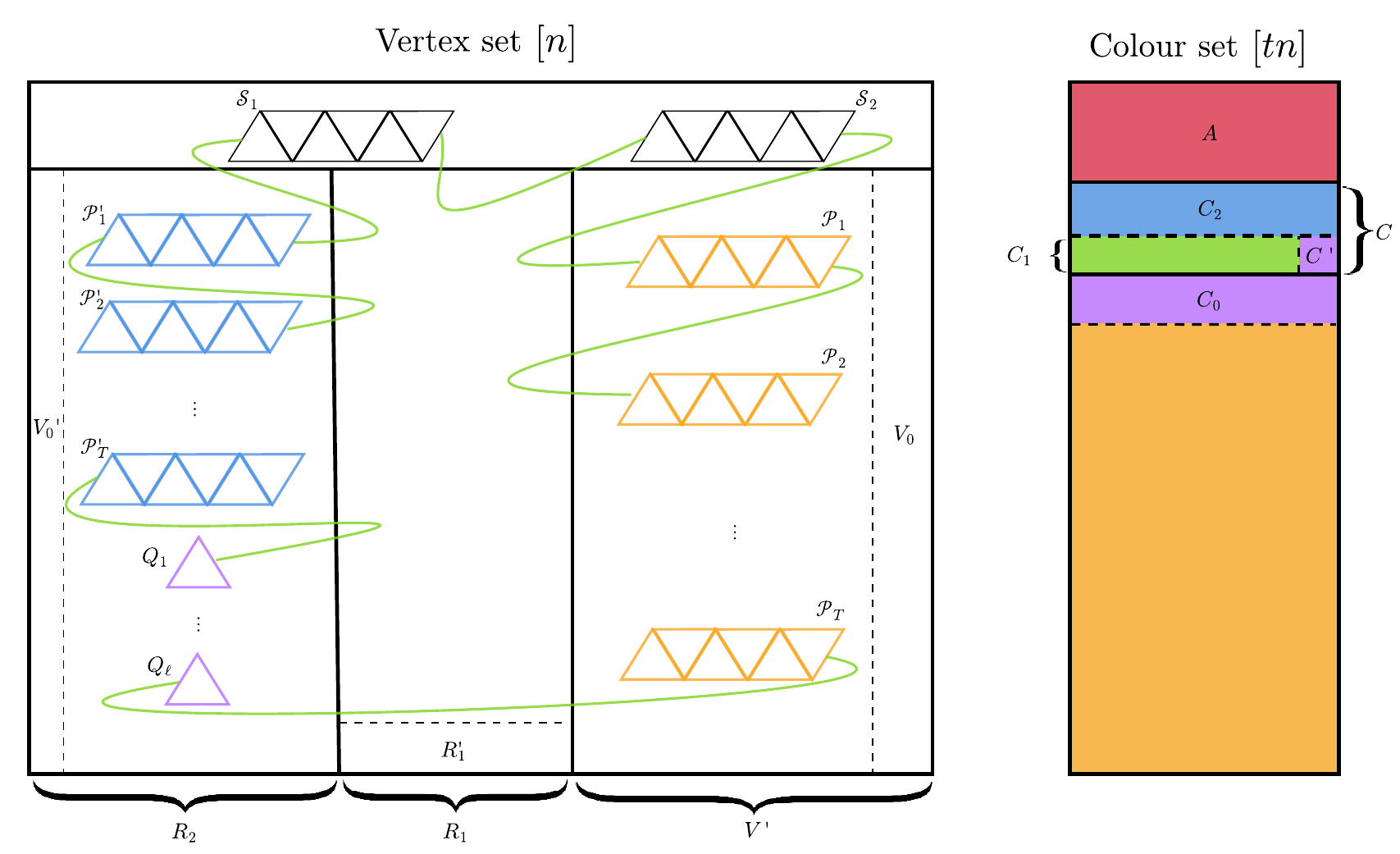}
	    \end{center}
	    \captionsetup{font=footnotesize}
	    \captionof{figure}{
	    The picture illustrates the partitions of the vertex set $[n]$ and the colour set $[tn]$ used in the proof of Theorem~\ref{thm:main} while building a rainbow Hamilton $\cA$-cycle.
	    Each triangle stands for a copy of the building block $\cA$ and each square of a path stands for a copy of an $\cA$-chain.
	    The green curved lines denote the connections between $\cA$-chains and are $\cA$-chains, and we remark that their internal vertices all belong to $R_1$.
	    Additionally, the colour of an edge refers to the subset of colours from which the colour assigned to the edge is taken; for example, the blue edges are those which get a colour from the subset $C_2$. 
	    Moreover, we left $\cS_1$ and $\cS_2$ uncoloured to stress that their edge colours are only assigned at the end of the proof. \\ \hspace{\textwidth}
        $\cS_1$ is the colour absorber for the colour sets $A$ and $C=C_1 \cup C_2$ (Step $1$, see \eqref{colour_absorber_S1});
        $\cS_2$ is the vertex absorber with colours in $C_1$ (Step $2$, see \eqref{vertex_absorber_S2});
	    $R_1$ is the reservoir connector via colours in $C_1$ (Step $3$, see \eqref{connector_R1});
        $R_2$ is a vertex set taken to balance colours (Step $4$). \\ \hspace{\textwidth}
        The family $\{\cP_i:i \in [T]\}$ of rainbow $\cA$-chains almost covers vertices in $V'$ and colours in $[tn] \setminus (A \cup C)$, with $V_0$ being the set of unused vertices and $C_0$ being the set of unused colours (Step $5$).
        The set $C' \subseteq C_1$ is a minimal size subset of $C_1$ to make $|C' \cup C_0|$ divisible by $e(\cA)$ and the rainbow collection $\{\mathcal{Q}_i:i \in [\ell]\}$ of copies of $\cA$ exhausts $C' \cup C_0$ inside $R_2$ (Step $6$).
        Moreover the family $\{\cP_i':i \in [T]\}$ of rainbow $\cA$-chains almost covers the leftover vertices in $R_2$ using colours in $C_2$, with $V_0'$ being the set of unused vertices (Step $7$).\\ \hspace{\textwidth}
        We connect into an almost spanning $\cA$-cycle all the $\cA$-chains and copies of $\cA$ built so far, using vertices in $R_1$ and colours in $C_1$ (Step $8$).
        We then absorb the leftover vertices via $\cS_2$ into a longer $\cA$-chain (with the same ends) and assign to its edges colours from $C_1$ (Step $9$).
        Finally we assign the colours of $A$ and the leftover colours of $C$ to $\cS_1$ (Step $10$).\\ \hspace{\textwidth}
        This gives a rainbow Hamilton $\cA$-cycle.}
	    \label{fig:proof}
    \end{figure}
    
    \textbf{1. Setting up the colour absorber.} Let $\cF$ be an $\cA$-chain on $\beta n$ edges (and thus $\frac{\beta n - e(\cA_s)}{e(\cA)-e(\cA_s)} (m - \ell) + \ell=\beta n/t+O(1)$ vertices), which exists by our choice of $\beta$. As the minimum degree threshold for the containment of $\cF$ is at most $\delta$, the hypotheses of Lemma~\ref{lem:colourabsorb:general} are satisfied for $\cF$ with the hypergraph collection $\fH$ (observe also that we may assume without loss of generality that $|\fH|=tn\geq \alpha n$). Therefore, there exist disjoint colour sets $A, C \subseteq [tn]$ with $|A|=e(\cF)-\gamma n=(\beta-\gamma)n$ and $|C|\geq 10\beta tn$, and an uncoloured copy $\cS_1$ of $\cF$ in $\fH$ such that the following holds.
    \begin{equation}
        \label{colour_absorber_S1}
        \text{\parbox{.92\textwidth}{
        Given any subset $B\subseteq C$ with $|B|=\gamma n$, there is a rainbow colouring of $\cS_1$ in $\fH$ using colours in $A\cup B$.
        }}
    \end{equation}
    We denote the start and the end of $\cS_1$ by $\cF_1$ and $\cF_2$, respectively, and define $S_1':=V(\cS_1) \setminus (V(\cF_1) \cup V(\cF_2))$ to be the set of all vertices of $\cS_1$ except those in its ends.
    Note that, as $\rho \ll \beta$, we have $10\beta t n \geq \rho n$ and, without relabelling, we can fix $C$ to be a subset of the original set $C$ of size exactly $\rho n$. For convenience, we split the set $C$ arbitrarily into two subsets $C_1$ and $C_2$, with $|C_1|=\gamma n/2$ (and $|C_2|=(\rho-\gamma/2)n$). Our goal in the remainder of the proof, in correspondence with Proposition~\ref{prop:sketcheasier} from the proof overview, is to find a rainbow $\mathcal{A}$-chain, vertex-disjoint with $S_1'$, starting in $\mathcal{F}_2$ and ending in $\mathcal{F}_1$, using all colours in $[tn]\setminus (C\cup A)$, and some colours from $C$. Note that, similarly to the setting of Proposition~\ref{prop:sketcheasier}, we have $(t - (\beta - \gamma))n$ colours available compared to $(t-\beta)n$ edges that we need to colour.
    
    \textbf{2. Setting up the vertex absorber.} 
    Since $|V(\mathcal{S}_1)|= \beta n/t + O(1) \leq \alpha n/1000$, where we used that $\beta\ll \alpha$ in the last inequality, we have that $\delta_d(\fH[V\setminus V(\mathcal{S}_1)])\geq (\delta+9\alpha/10)n^{k-d}$ by Observation~\ref{obs:mindegreecounting}. We define an auxiliary graph $\cK_1$ to be the $k$-uniform graph on vertex set $V_1 := V \setminus V(\cS_1)$, where $e$ is an edge of $\cK_1$ if and only if $e \in E(\cH_i)$ for at least $\alpha |C_1|/2=\alpha\gamma n/4$ values of $i \in C_1$. Then, using Proposition~\ref{prop:alsodirac} on $\cK_1$, we get that $\delta_d(\cK_1) \geq (\delta+\alpha/2)n^{k-d}\geq (\delta+\rho)n^{k-d}$. By the choice of the constants $\eta$ and $\tau$ for \textbf{Ab}, we have that there exists a set $S_2 \subseteq V_1$ of size at most $\tau n$ such that the following property holds. 
    \begin{equation}
        \label{vertex_absorber_S2}
        \text{\parbox{.92\textwidth}{
        For any set $L \subseteq V_1 \setminus S_2$ of size at most $\eta n$ with $|L|\in (m-\ell)\mathbb{N}$,
        there exists an embedding of an $\cA$-chain in $\cK_1$ with vertex set $S_2\cup L$.
        Furthermore, the embedding of the start and the end of this $\cA$-chain does not depend on the subset $L$.}}
    \end{equation}
    In particular, by taking $L=\emptyset$ in~\eqref{vertex_absorber_S2}, there is a copy $\cS_2$ of an $\cA$-chain in $\cK_1$ with vertex set $S_2$. 
    We denote its ends by $\cG_1$ and $\cG_2$, and define $S_2':=V(\cS_2) \setminus (V(\cG_1) \cup V(\cG_2))$. 
    
    \textbf{3. Setting up the reservoir connector.} By Observation~\ref{obs:mindegreecounting}, we have that $\delta_d(\fH[V\setminus (S_1' \cup S_2')])\geq (\delta+\alpha/2)n^{k-d}$, where we used that $\tau, \beta \ll\alpha$. We define another auxiliary graph $\cK_2$ as the $k$-uniform graph on vertex set $V_2 := V\setminus (S_1' \cup S_2')$, where $e$ is an edge of $\cK_2$ if an only if $e \in E(\cH_i)$ for at least $\alpha|C_1|/2=\alpha\gamma n/4$ values of $i \in C_1$. By Proposition~\ref{prop:alsodirac}, we know that $\delta_d(\cK_2)\geq (\delta+\alpha/3)n^{k-d}$.
    Using Lemma~\ref{lem:random} on $\cK_2$ with $t=2$, $n_1=\nu n$ and $n_2=|V_2| - n_1$, we get a set $R_1$ of size $\nu n$, such that every subset of $d$ vertices of $V_2$ have $d$-degree at least $(\delta+\alpha/6)n_1^{k-d}$ into $R_1$ in the graph $\cK_2$. 
    Moreover, we can assume that $R_1$ does not contain any of the vertices in $V(\cF_1) \cup V(\cF_2) \cup V(\cG_1) \cup V(\cG_2)$.
    From Observation~\ref{obs:mindegreecounting}, we have that for any two vertex-disjoint copies $\mathcal{S}$ and $\mathcal{T}$ of $\cA_s$ in $V_2$ and any $R' \subseteq R_1$ of size $|R'|\leq \alpha n_1/50$, we have that the minimum $d$-degree in $(R_1 \setminus R') \cup V(\mathcal{S}) \cup V(\mathcal{T})$ in $\cK_2$ is at least $(\delta+\alpha/10)n_1^{k-d}$.
    Then, property \textbf{Con} applied to the hypergraph $\cK_2[(R_1 \setminus R') \cup V(\mathcal{S}) \cup V(\mathcal{T})]$ implies the following.
   \begin{equation}
        \label{connector_R1}
        \text{\parbox{.92\textwidth}{ 
        For any $R' \subseteq R_1$ of size $|R'| \le \alpha n_1/50$ and any two vertex-disjoint copies $\mathcal{S}$ and $\mathcal{T}$ of $\cA_s$ in $\mathcal{K}_2[V_2 \setminus R']$, there is an $\cA$-chain of length at most $c$ in $(R_1 \setminus R') \cup V(\mathcal{S}) \cup V(\mathcal{T})$ in $\cK_2$ with start $\mathcal{S}$ and end $\mathcal{T}$.
        }}
    \end{equation}

 \par \textbf{4. Setting aside a random set to balance vertices and colours.} Define $n_0:=n-\frac{|A|+|C|}{t}$ and $r_2$ so that the equality below holds\footnote{Ignoring divisibility issues, $n_0$ represents the number of vertices an $\mathcal{A}$-cycle on $tn-|A|-|C|$ edges would have.}
 \begin{equation*}
   n-|V(\mathcal{S}_1)|-|S_2|-|R_1|-r_2 = n_0 \, .
 \end{equation*}
 In particular, we have that 
 $$r_2=\frac{|A|+|C|}{t}-\frac{\beta n}{t} - |S_2|-\nu n +O(1)=\frac{\rho-\gamma}{t}n - |S_2|-\nu n +O(1).$$
 As $0\leq |S_2|\leq \tau n$, $\tau,\nu \ll \rho$, and $\gamma \ll \rho$ we have that 
 \begin{equation}\label{boundr2} \frac{\rho}{2t}n \leq r_2\leq \frac{\rho-\gamma}{t}n +O(1).\end{equation}

 As $\nu,\tau,\beta \ll \alpha$, by Observation~\ref{obs:mindegreecounting}, we have that $\delta(\fH[V \setminus (V(\cS_1) \cup V(\cS_2) \cup R_1)])\geq (\delta+\alpha/3)n^{k-d}$. 
 Using Lemma~\ref{lem:random} on $\fH[V \setminus (V(\cS_1) \cup V(\cS_2) \cup R_1)]$ with $t=2$, we find a subset $R_2$ of $V \setminus (V(\cS_1) \cup V(\cS_2) \cup R_1)$ of size $r_2$, so that every subset of $d$ vertices of $V \setminus (V(\cS_1) \cup V(\cS_2) \cup R_1)$ have $d$-degree $(\delta+\alpha/6) r_2^{k-d}$ into $R_2$ with respect to each of the hypergraphs in the collection.
 
\textbf{5. Covering most of the leftover vertices via $\mathcal{A}$-chains using almost all the colours in $[tn]\setminus (A\cup C)$.} 
    Set $V':=V \setminus (V(\cS_1) \cup V(\cS_2) \cup R_1\cup R_2)$ and note $|V'|=n_0$. Let $\fH'$ be the hypergraph collection obtained by restricting $\fH$ to the vertex set $V'$ and colour set $[tn]\setminus(A\cup C)$. Using the upper bound from \eqref{boundr2} and that $\nu, \tau, \rho, \beta \ll \alpha$, we have through Observation~\ref{obs:mindegreecounting} that $\delta(\fH')\geq (\delta + \alpha/8)n_0^{k-d}$. Moreover, by our definition of $n_0$, we have $|\fH'| = tn - |A| - |C| = t n_0 = \left(\frac{e(\mathcal{A})-e(\mathcal{A}_s)}{m-\ell} \right)n_0$. Therefore $\fH'$ satisfies the hypotheses of Lemma~\ref{lem:rainbow_paths_tiling} and we find a rainbow collection $\{\mathcal{P}_i:i \in [T]\}$ of $T$-many vertex-disjoint $\mathcal{A}$-chains in $\fH'$, covering all but a vertex subset $V_0$ of size at most $\omega n_0$, and using only colours from $[tn] \setminus (A \cup C)$. 
    Moreover, observe that the set of colours from $[tn]\setminus(A\cup C)$ unused by $\bigcup_{i \in [T]} \mathcal{P}_i$, which we denote by $C_0$, has size at most $t\omega n_0+O(T) \le 2t \omega n_0$.
    
\textbf{6. Exhaust $C_{0}$ inside $R_2$.}
    Let $C' \subseteq C_1$ be a minimal size subset of $C_1$ such that $|C'\cup C_0|$ is divisible by $e(\mathcal{A})$. Note this can be accomplished with a subset $C'$ satisfying $|C'|=O(1)$ and, since $|C_0| \le 2 t\omega n_0$, we have that $|C_0\cup C'|\leq 2t\omega n_0 + O(1)$. Let $\fH''$ be the hypergraph collection obtained by restricting $\fH$ to the vertex set $R_2$ and colour set $C_0 \cup C'$. Recall that, by property of the set $R_2$, we have that $\delta(\fH'')\geq (\delta+\alpha/6){r_2}^{k-d}$. As $1/n \ll \rho$ and $n \gg n_0$, we have that $r_2$ is sufficiently large to apply \textbf{Fac} and deduce that $\fH''$ contains a rainbow collection $\{\mathcal{Q}_i:i \in [\ell]\}$ of $\ell=|C_0 \cup C'|/e(\cA)$ vertex-disjoint copies of $\cA$, using all of the colours in $C_{0}\cup C'$ and $|C_0 \cup C'| \le 3t\omega n \le (6t^2\omega/\rho)r_2 \le (\alpha/1000)r_2$ vertices of $R_2$, where we used the lower bound on $r_2$ in \eqref{boundr2} and $\omega \ll \rho$.

\textbf{7. Shrink leftover vertices in $R_2$ via $C_2$.} Let $R_2'$ be the subset of $R_2$ consisting of those vertices unused in the previous step and set $r_2':=|R_2'|$.
Since $r_2-r_2'\leq (\alpha/1000)r_2$ and using Observation~\ref{obs:mindegreecounting}, we have that $\delta(\fH''[R_2'])\geq (\delta+\alpha/30){r_2'}^{k-d}$. Note that by the upper bound in \eqref{boundr2} we have that $$tr_2'\leq tr_2\leq (\rho-\gamma)n +O(1)\leq (\rho-\gamma/2)n=|C_2|.$$
Let $\fH'''$ be the hypergraph collection obtained by restricting $\fH$ to the vertex set $R_2'$ and colour set $C_2$. Then $|\fH'''| = |C_2| \geq \left(\frac{e(\mathcal{A})-e(\mathcal{A}_s)}{m-\ell} \right) r_2'$. Hence, similarly to Step~5, we can apply
Lemma~\ref{lem:rainbow_paths_tiling} to $\fH'''$ in order to find a rainbow collection $\{\mathcal{P}'_i:i \in [T]\}$ of $T$-many vertex-disjoint $\mathcal{A}$-chains (with colours coming from $C_2$) in $\fH'''$, covering all but a vertex subset $V_0'\subseteq R_2'$ of size at most $\omega  r_2'$.

\textbf{8. Connect everything via $C_1$ and $R_1$ to build an almost spanning $\cA$-cycle.}
    We recall that we built one uncoloured $\cA$-chain in each of Step~1 and~2, $|C_0 \cup C'|/e(\cA)$ rainbow $\cA$-chains in Step~6 (indeed a copy of $\cA$ is trivially a rainbow $\cA$-chain of length $1$), and $T$ rainbow $\cA$-chains in each of Step~5 and~7.
    Therefore, at this point there are $2+2T+|C_0 \cup C'|/e(\cA) \le 3t \omega n$ vertex-disjoint $\cA$-chains, which we will now connect to build an $\cA$-cycle, using additional vertices in $R_1$ and colours in $C_1$. This can be done by repeatedly invoking property~\eqref{connector_R1}. Indeed, suppose that the chains are labelled $\mathcal{J}_1,\cdots, \mathcal{J}_z$ where $z\leq 3t \omega n$. Suppose that for some $1\leq z'\leq z$, we found an $\cA$-chain $\mathcal{R}$ in $\mathcal{K}_2$ such that the following properties all hold.
    \begin{itemize}
        \item $V(\mathcal{R})\supseteq \bigcup_{i\in[z']} V(\mathcal{J}_i)$;
        \item With $R':= V(\mathcal{R})\setminus \bigcup_{i\in[z']} V(\mathcal{J}_i)$, we have $R' \subseteq R_1$ and $|R'| \le ((m-\ell)c+\ell)z'$;
        \item The start of $\mathcal{R}$ is the start of $\mathcal{J}_1$, and its end is the end of $\mathcal{J}_{z'}$.
    \end{itemize}
    \par We remark that for $z'=1$, the $\cA$-chain $\mathcal{R}=\mathcal{J}_1$ has the above properties.
    As $|R'|\le \alpha n_1/50=\alpha \nu n/50$ (using that $\omega\ll \alpha,1/c,1/m$), property~\eqref{connector_R1} applies to show that there is an $\cA$-chain in $\mathcal{K}_2$ of length at most $c$ starting in the end of $\mathcal{J}_{z'}$ and ending in the start of $\mathcal{J}_{z'+1}$ (where $z'+1=1$ if $z'=z$), which uses vertices of $R_1 \setminus R'$ and these shared ends. 
    Observe that these chain uses fewer than $(m-\ell)c+\ell$ vertices of $R_1 \setminus R'$. 
    This shows that $\mathcal{R}$ can be extended to satisfy the above properties with respect to $z'+1$.
    Inductively, we obtain an $\mathcal{A}$-cycle $\mathcal{C}_0$ in $\mathcal{K}_2$ covering $\bigcup_{i\in[z]} \mathcal{J}_i$, and we denote by $R_1'$ the vertices from $R_1$ unused by $\mathcal{C}_0$.

    \par Consider the set of edges of $\mathcal{C}_0$ not contained in some $\mathcal{J}_i$, i.e. the edges we have found in the previous steps to connect the various $\mathcal{J}_i$'s. Note that there are at most $((m-\ell)c+\ell)zt \le \alpha\gamma n/1000$ such edges, where we used $\omega \ll \gamma, \alpha, 1/c, 1/m$. Moreover, each such edge belongs to $\cK_2$ and thus has at least $\alpha|C_1|/2=\alpha\gamma n/4$ colours coming from $C_1$. Therefore we can greedily assign a distinct colour of $C_1$ to each such edge.
    
    \item \textbf{9. Absorb the leftover vertices.} Note that $\mathcal{C}_0$ covers everything in $V$, except the sets $V_0$, $V_0'$, and $R_1'$ which are leftover from Steps~5,~7, and~8, respectively. Note that $|R_1'| + |V_0| + |V_0'| \le \nu n + \omega n_0 + \omega r_2' \le (\nu+2\omega)n \le \eta n$, where we used that $\omega, \nu \ll \eta$.
    Therefore, by~\eqref{vertex_absorber_S2}, there exists an embedding of an $\cA$-chain $\mathcal{S}_2'$ in $\cK_1$ with vertex set $S_2 \cup R_1' \cup V_0 \cup V_0'$, and with the same ends as the $\cA$-chain $\cS_2$. As in the previous step, we can then colour the edges of $\mathcal{S}_2'$ in a rainbow fashion, by assigning colours still available in $C_1$. This is possible as $|V(\cS_2)| \le (\tau + \eta)n$ and thus there are at most $t(\tau+\eta)n\le \alpha \gamma n/10$ new edges, where we used $\tau, \eta \ll \gamma, \alpha$. 
    Moreover these edges belong to the hypergraph $\cK_1$ and appear in at least $\alpha \gamma n/4$ colours in $C_1$, while we only used at most $\alpha\gamma n/1000$ colours from $C_1$ in the previous step.
    Therefore there are at least $\alpha \gamma n/8$ available colours for each edge, and we can greedily assign distinct colours.  
    
    \textbf{10. Assign a colouring to $\cS_1$.}
    Observe that we now have a Hamilton $\cA$-cycle that is rainbow except for $\cS_1$, which is still uncoloured.
    Moreover, we have used all colours outside $A \cup C$ and some colours in $C$, and we have not used any of the colours in $A$.
    Therefore the unused colours must be those in $A$ together with a subset $B \subseteq C$ of size $\gamma n$.
    We can then assign colours to $\cS_1$ in a rainbow fashion by property~\eqref{colour_absorber_S1}.
    This completes the rainbow embedding and finishes the proof.
\end{proof}

\section{Applications of the main theorem and proof of Theorem~\ref{thm:applications}}
\label{sec:applications}

In this section we discuss some applications of our main theorem and, in particular, we prove Theorem~\ref{thm:applications}.
The proofs of the statements of Theorem~\ref{thm:applications} all follow the same strategy. Suppose we want to prove $d$-colour-blindness of a family $\cF$. 
We first identify a link $\cA$ such that each member of $\cF$ is an $\cA$-cycle.
Then we show that $\cA$ is $(\delta,d)$-good, with $\delta$ being the uncoloured minimum degree threshold for the family $\cF$.
Once this is done, the $d$-colour-blindness of $\cF$ is a consequence of Theorem~\ref{thm:main}.
We will give a full proof of the statement~\ref{thm:app_HC2} of Theorem~\ref{thm:applications} with $r=2$, while we will only sketch how to prove properties $\ab$, $\con$, and $\fac$ for the statement~\ref{thm:app_HC2} with $r>2$ and the statement~\ref{thm:app_cycle}.
The reader can then easily complete a full proof, by mimicking the one given for the square of Hamilton cycles. 

\subsection{Powers of Hamilton cycles}

The (uncoloured) minimum degree threshold for the containment of the $r$-th power of a Hamilton cycle in a $2$-uniform graph was conjectured to be $\tfrac{r}{r+1} n$ by Pósa (for $r=2$) and Seymour (for larger $r$).
This was proved by Komlós, Sárkozy, and Szemerédi~\cite{square_Ham_cycle,powers_Ham_cycle}, using the regularity method and the Blow-Up Lemma.
Later, Levitt, Sárkozy, and Szemerédi~\cite{square_ham_noreg} obtained a proof for the case $r=2$ that avoids the regularity lemma and is instead based on the absorption method.
More recently, Pavez-Signé, Sanhueza-Matamala, and Stein~\cite{pavez2021towards} generalised this to $r \ge 2$, while studying the hypergraph version of the problem.
Both of these fit our framework and allow us to obtain part~\ref{thm:app_HC2} of Theorem~\ref{thm:applications}.
We will first focus on the case $r=2$, which we will use as a more detailed example and can be reformulated as follows.

\begin{theorem}[Rainbow version of Pósa's conjecture]
\label{thm:posa_rb}
For any $\alpha>0$ there exists $n_0$ such that for $n \ge n_0$ the following holds.
Any graph collection $\fG$ on vertex set $[n]$ with $\delta(\fG) \ge (2/3 + \alpha)n$ contains a transversal copy of the square of a Hamilton cycle.
\end{theorem}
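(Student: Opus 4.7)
The plan is to apply Theorem~\ref{thm:main} with $\cA$ taken to be the triangle endowed with the vertex ordering depicted in Figure~\ref{fig:square}. Under that ordering, Hamilton $\cA$-cycles are exactly squares of Hamilton cycles. The celebrated theorem of Komlós, Sárközy, and Szemerédi~\cite{square_Ham_cycle} (i.e., Pósa's conjecture for large $n$) asserts that the uncoloured minimum $1$-degree threshold is $\delta(\cA,1) = 2/3$. Consequently, Theorem~\ref{thm:posa_rb} will follow from Theorem~\ref{thm:main} as soon as one verifies that this triangle link is $(2/3, 1)$-good, i.e., that it satisfies properties $\ab$, $\con$, and $\fac$.

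To verify $\ab$ and $\con$, I extract the needed statements from the absorption-based proof of Pósa's conjecture due to Levitt, Sárközy, and Szemerédi~\cite{square_ham_noreg}. That proof proceeds along exactly the template of Steps~1--5 from the introduction: it constructs an absorbing squared path with two distinguished endpoints $v,w$ that can incorporate any sufficiently small vertex set $L \subseteq V(H)\setminus A$ into a spanning squared path between $v$ and $w$ on $A\cup L$; this is precisely $\ab$. It also contains a short-connection lemma showing that any two vertex-disjoint edges in an $n$-vertex graph of minimum degree at least $(2/3+\alpha)n$ can be joined by a squared path of bounded length $c = c(\alpha)$; this is precisely $\con$. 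In each case, the hypothesis of~\cite{square_ham_noreg} on the host graph coincides with what Definition~\ref{def:good} requires with $\delta_0 = 2/3$.

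To verify $\fac$, I argue greedily. Given a collection $\fH$ with $|\fH| \le \beta n$ (with $3 \mid |\fH|$) and $\delta(\fH) \ge (2/3+\alpha)n$, suppose we have already found $s < |\fH|/3$ pairwise vertex-disjoint rainbow triangles, covering a set $S$ of $3s \le \beta n$ vertices. Choose three unused colours $i_1, i_2, i_3$. By Observation~\ref{obs:mindegreecounting}, each $H_{i_j}[V\setminus S]$ still has minimum degree at least $(2/3+\alpha/2)n$, provided $\beta \ll \alpha$. Take any edge $uv$ of $H_{i_1}[V\setminus S]$; by inclusion-exclusion, the $H_{i_2}$-neighbours of $u$ and the $H_{i_3}$-neighbours of $v$ inside $V\setminus(S\cup\{u,v\})$ intersect in at least $(1/3+\alpha)n$ vertices, and any $w$ in this intersection completes a new rainbow triangle. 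Iterating until every colour is used produces the required rainbow triangle packing.

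The principal obstacle is the verification of $\ab$, since Definition~\ref{def:good} demands that the two ends of the absorbing $\cA$-chain be fixed \emph{before} the absorbed set $L$ is specified. The construction in~\cite{square_ham_noreg} does yield such a structure, but some bookkeeping is needed to isolate the two ends and to confirm that the remaining squared-path skeleton stays invariant as $L$ varies over all admissible sets; one must also track the additional divisibility constraint $|L| \in (m-\ell)\N$. Once this is done, invoking Theorem~\ref{thm:main} with $\delta_0 = \delta = 2/3$ directly yields Theorem~\ref{thm:posa_rb}.
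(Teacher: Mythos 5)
Your proposal follows the same overall route as the paper: apply Theorem~\ref{thm:main} with $\cA$ the ordered triangle from Figure~\ref{fig:square}, verify $\ab$ and $\con$ from the absorption-based proof of P\'osa's conjecture due to Levitt, S\'ark\"ozy, and Szemer\'edi~\cite{square_ham_noreg}, and verify $\fac$ separately. The $\ab$ and $\con$ verifications agree with the paper's (Lemmas~3 and~5 of~\cite{square_ham_noreg}, quoted in the paper as~\ref{thm:posa:abs} and~\ref{thm:posa:con} of Theorem~\ref{thm:posa}). The one genuine departure is in the verification of $\fac$: the paper invokes the rainbow $K_3$-factor theorem (Theorem~$1.3$ of~\cite{alp_yani_richard}, quoted as Theorem~\ref{thm:factor_rb} with $r=2$), whereas you give a greedy, self-contained argument. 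Your greedy argument is correct: for any three graphs on $V$ each with minimum degree at least $(2/3+\alpha)n$ and any forbidden set $S$ of size at most $\beta n$ with $\beta\ll\alpha$, after picking an arbitrary edge $uv$ of the first graph inside $V\setminus S$, an inclusion-exclusion count shows that the intersection of the $H_{i_2}$-neighbourhood of $u$ with the $H_{i_3}$-neighbourhood of $v$ inside $V\setminus(S\cup\{u,v\})$ has size at least roughly $(1/3+2\alpha-2\beta)n$, hence is non-empty, giving a new vertex-disjoint rainbow triangle; iterating exhausts all colours. Your route is more elementary and avoids the non-trivial rainbow factor theorem as a black box. The paper's choice of Theorem~\ref{thm:factor_rb} is more in the spirit of its modular framework (reuse an existing coloured lemma rather than re-derive it), and is the argument the authors flag as ``more delicate'' for powers of Hamilton cycles; but for $\cA=K_3$ the greedy computation you give does the job. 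The only small caveats you correctly flag yourself (isolating fixed ends in the absorber and the divisibility constraint in $\ab$) are routine bookkeeping that the cited Lemma~3 of~\cite{square_ham_noreg} handles.
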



\par As mentioned above, in order to prove Theorem~\ref{thm:posa_rb}, it is enough to show that the square of a cycle is an $\cA$-cycle for a suitable choice of a $(2/3,1)$-good link $\cA$. 
Towards that goal, we let $\cA$ be the $2$-link coming from an arbitrary ordering of $K_3$ (see Figure~\ref{fig:square}) and we prove that such $\cA$ is indeed $(2/3,1)$-good. 
The properties {\bf Ab} and {\bf Con} for $\cA$ follow from the proof of the (uncoloured) Pósa conjecture in~\cite{square_ham_noreg}. 
In that proof, the authors give an exact version of the uncoloured threshold, by distinguishing an extremal and a non-extremal case. 
They say that a graph is extremal if it has two (not necessarily disjoint) sets each of size roughly $n/3$ with few edges in between.
However, for any $\alpha >0$, a graph $G$ with $\delta(G) \ge (2/3+\alpha)n$ cannot be extremal, thus we can use all lemmas from~\cite{square_ham_noreg} dealing with the non-extremal case. We summarise the statements we use from~\cite{square_ham_noreg} as follows.

\begin{theorem}[Lemma~$3$, Lemma~$5$, and Theorem~$1$ in~\cite{square_ham_noreg}]
\label{thm:posa}
For any $\alpha>0$ there exists $n_0$ such that for $n \ge n_0$ the following holds for any $n$-vertex graph with minimum degree $\delta(G) \ge (2/3 +\alpha)n$.
\begin{enumerate}[label=\upshape(P\arabic*)]
\item \label{thm:posa:con} For any two disjoint ordered edges $(a,b)$ and $(c,d)$ there is a square of a path of length at most $10 \alpha^{-4}$, with end-tuples $(a,b)$ and $(c,d)$.
\item \label{thm:posa:abs} There exists the square of a path $P$ of length at most $\alpha^9 n$ such that for every subset $L \subseteq V(G) \setminus V(P)$ there exists a square of a path $P_L$ with $V(P_L)=V(P) \cup L$ that has the same end-tuples as $P$.
\item \label{thm:posa:threshold} There exists the square of a Hamilton cycle in $G$.
\end{enumerate}
\end{theorem}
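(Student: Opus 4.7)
All three statements are naturally proved together via the absorption method, with the common engine being the observation that $\delta(G) \ge (2/3 + \alpha)n$ forces any bounded-size set $T \subseteq V(G)$ to have $(1/3 + 2\alpha)n - O(|T|)$ common neighbours. This co-degree bound drives every step: given the end-tuple $(x, y)$ of a partial square of a path, the next vertex can be chosen from $N(x) \cap N(y)$, leaving linear slack for future choices and for avoiding any constant-size set of forbidden vertices.

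For \ref{thm:posa:con}, I would build the connecting square of a path one vertex at a time from $(a,b)$ via a bubble-up argument: the set of end-tuples reachable from $(a,b)$ in $i \le O(\alpha^{-4})$ steps either grows by a factor $1 + \Omega(\alpha)$ at each step or already covers a positive fraction of all ordered edges of $G$. Running the same argument backwards from $(c,d)$, the two reachability sets must intersect, yielding a square of a path of length $O(\alpha^{-4})$ with the prescribed end-tuples. For \ref{thm:posa:abs}, the atomic absorber for a single vertex $w$ is a quadruple $(a_1, a_2, a_3, a_4)$ spanning a square of a path and satisfying $w \in N(a_1) \cap N(a_2) \cap N(a_3) \cap N(a_4)$; then both $a_1 a_2 a_3 a_4$ and $a_1 a_2 w a_3 a_4$ are squares of paths with the same end-tuples $(a_1, a_2)$ and $(a_3, a_4)$, so $w$ may optionally be inserted. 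A direct co-degree count shows each $w$ lies in $\Omega((\alpha n)^4)$ such configurations, and a probabilistic alteration produces a family of $\Theta(\alpha^9 n)$ vertex-disjoint absorbers in which every vertex of $G$ is served by $\Omega(\alpha^{10} n)$ of them. Concatenating the absorbers via \ref{thm:posa:con} assembles the desired square of a path $P$ of length at most $\alpha^9 n$, and the robust absorbing property follows by a Hall-type system-of-distinct-representatives argument on the bipartite absorber/leftover graph.

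For \ref{thm:posa:threshold}, after deleting $V(P)$ from $G$ the minimum-degree condition survives with negligible loss, since $|V(P)| = O(\alpha^9 n)$. An iterative near-covering argument — partitioning the remainder into linear-size blocks and greedily extending square paths using the co-degree bound — produces $O(1)$ long vertex-disjoint squares of paths covering all but a small set $L \subseteq V(G) \setminus V(P)$ whose size lies below the absorbing capacity of $P$. Using \ref{thm:posa:con}, one stitches these long paths together with the two ends of $P$ into an almost-spanning square of a cycle missing only $L$, and finally one absorbs $L$ into $P$ via \ref{thm:posa:abs}. The main obstacle is \ref{thm:posa:abs}: calibrating the probabilistic selection so that \emph{every} vertex of $G$ is simultaneously served by sufficiently many vertex-disjoint absorbers, while keeping the total length of $P$ below $\alpha^9 n$ and preserving the ability to link absorbers via \ref{thm:posa:con}, is the delicate counting step and is where the proof in \cite{square_ham_noreg} concentrates most of its technical work.
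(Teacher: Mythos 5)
Theorem~\ref{thm:posa} is not proved in this paper; it is cited verbatim as Lemmas~3 and~5 and Theorem~1 of Levitt, S\'ark\"ozy, and Szemer\'edi~\cite{square_ham_noreg}, so there is no internal proof to compare against. Your sketch does reconstruct the overall strategy of that cited paper faithfully at a qualitative level: the co-degree observation that any two vertices share $\geq (1/3+2\alpha)n$ common neighbours, a connecting lemma proved by growing reachability sets of end-tuples, an absorber gadget consisting of a $P_4^2$ lying inside $N(w)$ so that $w$ can be optionally inserted without changing the end-tuples, probabilistic selection of a family of disjoint absorbers robustly covering every vertex, and the final \emph{near-cover} + \emph{connect} + \emph{absorb} assembly for \ref{thm:posa:threshold}.

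However, your quantitative bookkeeping for \ref{thm:posa:abs} does not compose. You propose $\Theta(\alpha^9 n)$ vertex-disjoint absorbers concatenated via \ref{thm:posa:con}, whose connectors have length up to $10\alpha^{-4}$. This yields a square-path $P$ with $\Theta(\alpha^9 n)\cdot\Theta(\alpha^{-4}) = \Theta(\alpha^5 n)$ vertices, far exceeding the required bound $|V(P)| \le \alpha^9 n$. Keeping the length under $\alpha^9 n$ forces at most $O(\alpha^{13} n)$ absorbers, and the count of absorbers serving each vertex must then be recomputed; it comes out smaller than the $\Omega(\alpha^{10} n)$ you claim. (This cascading effect of the connector length is exactly why the absorption capacity $\eta$ ends up much smaller than $\tau$ in property~$\textbf{Ab}$ --- in the application here $\tau = \alpha^9$ but $\eta = \alpha^{20}$.) Relatedly, the statement of \ref{thm:posa:abs} as transcribed above omits the upper bound $|L| \le \alpha^{20}n$ present in \cite{square_ham_noreg}; you implicitly assume such a constraint when you say $|L|$ must lie ``below the absorbing capacity of $P$,'' and without it the statement would be vacuously as hard as \ref{thm:posa:threshold} itself. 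Your high-level picture is right, but these are precisely the points the cited paper must treat with care.
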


Finally the property $\textbf{Fac}$ for $\cA$ follows as a special case of a theorem in~\cite{alp_yani_richard}.

\begin{theorem}[Theorem~$1.3$ in~\cite{alp_yani_richard}] 
    \label{thm:factor_rb}
    For any integer $r\ge 1$ and any $\alpha>0$, there exists $n_0 \in \mathbb{N}$ such that for $n \ge n_0$ the following holds.
    Any graph collection $\fG$ on $[n]$ with $\delta(\fG) \ge \left(\tfrac{r}{r+1} + \alpha\right)n$ contains a transversal copy of a $K_{r+1}$-factor.
\end{theorem}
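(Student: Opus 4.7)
The plan is to prove Theorem~\ref{thm:factor_rb} by an absorption strategy analogous to --- but significantly simpler than --- the proof of Theorem~\ref{thm:main}. Because a $K_{r+1}$-factor is disconnected, no analogue of property \textbf{Con} is needed; only a colour absorber, a vertex absorber, and a way of covering most vertices with most colours by disjoint rainbow copies of $K_{r+1}$ are required.

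The first step is to build the two absorbers. For the colour absorber, I would apply Lemma~\ref{lem:colourabsorb:general} with $\cF$ a packing of $\beta n/(r+1)$ vertex-disjoint copies of $K_{r+1}$; the uncoloured minimum-degree threshold for $\cF$ is at most $r/(r+1)$ by Hajnal--Szemerédi, so the lemma yields an uncoloured copy $\cS_1$ of $\cF$ and disjoint colour sets $A, C \subseteq [|\fG|]$ such that $\cS_1$ admits a rainbow colouring in $A \cup B$ for any $B \subseteq C$ of size $\gamma n$. For the vertex absorber I would use a standard Hajnal--Szemerédi distributive absorber (as in the classical proof of the $K_{r+1}$-factor theorem) applied inside the consensus graph $\cK_1$ given by Proposition~\ref{prop:alsodirac} with respect to a reserved colour subset $C_1 \subseteq C$ of size $\gamma n/2$. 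This produces a set $S_2$ disjoint from $V(\cS_1)$ of size $O(\tau n)$ such that for every $L \subseteq V(\fG)\setminus (V(\cS_1)\cup S_2)$ with $|L| \in (r+1)\mathbb{N}$ and $|L| \le \eta n$, there is a $K_{r+1}$-factor on $S_2 \cup L$ inside $\cK_1$.

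Next, I would set aside a small random vertex reservoir $R_2$ (using Lemma~\ref{lem:random}) to balance the leftover vertex/colour counts exactly. Outside $V(\cS_1)\cup S_2 \cup R_2$, I would cover all but a small remainder $V_0$ by a rainbow $K_{r+1}$-packing using almost all the colours in $[|\fG|] \setminus (A\cup C)$. This is an analogue of Lemma~\ref{lem:rainbow_paths_tiling} with $K_{r+1}$-factors in place of $\cA$-chains, and is proved by randomly partitioning the vertex set via Lemma~\ref{lem:random}, splitting the colour set into matching chunks, and then applying Hajnal--Szemerédi iteratively on a consensus graph built from each colour chunk via Proposition~\ref{prop:alsodirac}. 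Any small leftover colour set $C_0$ outside $A\cup C$ is then exhausted inside $R_2$ by greedily finding vertex-disjoint rainbow copies of $K_{r+1}$, one colour at a time, using that Observation~\ref{obs:mindegreecounting} preserves enough minimum degree on each individual $G_c$ throughout the iteration to apply Hajnal--Szemerédi on the unused portion.

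Finally, the two absorbers close everything up: the vertex absorber $S_2$ swallows the unused vertices in $R_2 \cup V_0$, with the edges of the resulting $K_{r+1}$-factor rainbow-coloured greedily from the still-available colours in $C_1$ (there are $\Omega(\gamma n)$ such colours per edge in $\cK_1$ by construction), and the colour absorber $\cS_1$ is rainbow-coloured using its promised flexibility to exactly consume the remaining colours in $A$ together with the residual subset of $C$ of size exactly $\gamma n$. The main obstacle I expect is the rainbow $K_{r+1}$-packing step that exhausts a prescribed small colour set $C_0$: unlike the edge case in Observation~\ref{obs:fac}, each greedy step must locate an entire $K_{r+1}$ carrying a specific fresh colour, so one must maintain Hajnal--Szemerédi-strength minimum degree in every individual $G_c$ and moreover ensure that the colour budget of $C_0$ (and later $C_1$) is chosen small enough compared to the reservoir size that Observation~\ref{obs:mindegreecounting} keeps the degree condition intact throughout.
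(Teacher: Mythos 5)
The paper does not prove this statement: it is cited verbatim as Theorem~1.3 of Montgomery, M\"uyesser, and Pehova~\cite{alp_yani_richard}, and is invoked as a black box precisely to establish property \textbf{Fac} for $\cA=K_3$ when deriving Theorem~\ref{thm:posa_rb} from Theorem~\ref{thm:main}. So there is no internal proof against which to measure your sketch.

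On its own terms, your absorption plan is a reasonable outline in the spirit of~\cite{alp_yani_richard} and of Theorem~\ref{thm:main}: a colour absorber via Lemma~\ref{lem:colourabsorb:general}, a vertex absorber in a consensus graph, a random balance reservoir, a rainbow tiling covering most colours, and a colour-exhaustion step. However, the step you flag as the main obstacle is handled differently from how you describe. Invoking ``Hajnal--Szemer\'edi on the unused portion'' is not the right mechanism here: Hajnal--Szemer\'edi is an uncoloured statement and does not, on its own, force a prescribed fresh colour onto a prescribed edge. The correct and in fact much simpler argument is a direct vertex-by-vertex greedy. Partition $C_0$ (after a divisibility adjustment) into blocks of size $\binom{r+1}{2}$, assign the colours of each block to the edges of a $K_{r+1}$ in any order, and pick the vertices one at a time: the $(i+1)$-st vertex must lie in an intersection of $i\le r$ colour-neighbourhoods, each of size at least $\bigl(\tfrac{r}{r+1}+\alpha'\bigr)n'$ in the ambient set, so the intersection has size at least $\bigl(\tfrac{1}{r+1}+r\alpha'\bigr)n'>0$. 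Observation~\ref{obs:mindegreecounting} maintains the degree condition in every $G_c$ as long as the number of vertices consumed (at most $\tfrac{2}{r}|C_0|$) stays well below $\alpha'$ times the ambient set size; this is easy to arrange. Incidentally, this greedy verifies \textbf{Fac} directly for $\cA=K_{r+1}$ without appealing to Theorem~\ref{thm:factor_rb} at all.

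Finally, note a latent circularity that your plan correctly avoids: you cannot deduce Theorem~\ref{thm:factor_rb} as a corollary of Theorem~\ref{thm:main}, since the paper's only route to \textbf{Fac} for cliques is Theorem~\ref{thm:factor_rb} itself. Running the absorption machinery from scratch, as you propose, is the right move. With the corrected colour-exhaustion step, your outline is sound; whether it coincides with the actual argument of~\cite{alp_yani_richard} cannot be determined from the present paper alone.
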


We are now ready to give a full proof of Theorem~\ref{thm:posa_rb}.

\begin{proof}[Proof of Theorem~\ref{thm:posa_rb}]
Let $\alpha>0$ and $\mathcal{A}$ be the $2$-link of order $3$ and uniformity $2$ coming from an arbitrary ordering of $K_3$.
Note that an $\mathcal{A}$-chain is the square of a path (see Figure~\ref{fig:square}) and an $\mathcal{A}$-cycle is the square of a cycle.

The minimum degree threshold for a Hamilton $\mathcal{A}$-cycle is $\delta=\delta(\mathcal{A},1)=2/3$ by~\ref{thm:posa:threshold} of Theorem~\ref{thm:posa}.
Let $n_0$ be large enough for Theorem~\ref{thm:posa} and~\ref{thm:factor_rb} to hold.
Then $\mathcal{A}$ has property \textbf{Ab} with $\tau=\alpha^9$ and $\eta=\alpha^{20}$ by~\ref{thm:posa:abs} of Theorem~\ref{thm:posa}, and it has property \textbf{Con} with $C=10\alpha^{-4}$ by~\ref{thm:posa:con} of Theorem~\ref{thm:posa}.
Moreover, $\mathcal{A}$ has property \textbf{Fac} with $\beta_0=1$ by Theorem~\ref{thm:factor_rb} (with $r=2$).
Therefore $\cA$ is $(\delta,1)$-good.

Now let $\fG$ be a graph collection on $[n]$ with $\delta(\fG) \ge (2/3 + \alpha)n$ with $n \ge n_0$ .
Then, by Theorem~\ref{thm:main}, there exists a rainbow Hamilton $\mathcal{A}$-cycle in $\fG$, i.e.~a transversal copy of the square of a Hamilton cycle, as desired.
\end{proof}

To obtain part~\ref{thm:app_HC2} of Theorem~\ref{thm:applications} for $r>2$ we can proceed exactly as for $r=2$.
However, the statements in~\cite{pavez2021towards} do not readily match our setup as those given in Theorem~\ref{thm:posa}.
Nevertheless, Lemma~4.3 in~\cite{pavez2021towards} implies property \textbf{Con} and it is straightforward to check that together with Lemma~7.2 in~\cite{pavez2021towards} this also gives property \textbf{Ab}.
Indeed, Lemma~7.2 in~\cite{pavez2021towards} states that if $G$ is a graph with $\delta(G) \ge (r/(r+1)+\alpha)n$ and $n$ is large enough, then there is a small set of pairwise vertex-disjoint $r$-th powers of short paths, such that every vertex of $G$ can be absorbed into many of them (into the $r$-th power of a path).
These paths can then be connected into the $r$-th power of a single path to fulfil property \textbf{Ab} (c.f.~Step~1 of the proof of Theorem~1.1 in~\cite{pavez2021towards} for more details).
As property \textbf{Fac} still holds by Theorem~\ref{thm:factor_rb}, we have that $\cA$ is $(\delta,1)$-good, for $\cA$ being the $(r-1)$-link of order $r$ and uniformity $2$ coming from an arbitrary ordering of $K_r$ and $\delta=\delta(\cA,1)=r/(r+1)$.
The result follows by Theorem~\ref{thm:main}.

\subsection{Hamilton $\ell$-cycles in $k$-uniform hypergraphs}

The statements in~\ref{thm:app_cycle} of Theorem~\ref{thm:applications} state $d$-colour-blindness of the family $\cF$ of $k$-uniform Hamilton $\ell$-cycles, for various ranges of $d$, $k$, and $\ell$.
Note that an $\ell$-cycle in a $k$-uniform hypergraph is an $\cA$-cycle, with $\cA$ being the $\ell$-link of order $k$ and uniformity $k$ consisting of a single edge (see Figure~\ref{fig:edge}).
The result will follow from our main theorem, once we will have shown that such $\cA$ is $(\delta,d)$-good, with $\delta$ being the uncoloured minimum degree threshold of the considered family $\cF$.

We start by observing that, since $\cA$ consists of a single edge, Observation~\ref{obs:fac} guarantees that $\cA$ satisfies property $\fac$ for any $k \ge 3$, and $1 \le \ell, d \le k$.
The properties $\ab$ and $\con$ can be derived from the absorption-style proof of the uncoloured minimum degree threshold for $\cF$. 
We summarise the precise reference for each property and each case of the statements in~\ref{thm:app_cycle} of Theorem~\ref{thm:applications} in Table~\ref{fig:table_applications}.
Although some of these lemmas are not stated in the same exact form of the corresponding property, it is always straightforward to derive the properties from the lemmas. 

Nevertheless, we clarify a few points.
Firstly, we consider the second row of Table~\ref{fig:table_applications}, where $1 \le \ell < k/2$ and $d=k-1$.
Lemma $6$ in~\cite{codegree_l_kunif} states that for every integer $k \ge 2$ and every pair of real numbers $d, \eps > 0$, there exists an $n_0$ such that for every $k$-uniform hypergraph $\cH$ on $n$ vertices with $\delta(\cH) \ge dn$ the following holds.
There is a set $R$ of size at most $\eps n$ such that each set of $k-1$ vertices has degree at least $d \eps n/2$ into $R$.
This implies property $\con$ with $c=3$. 
Indeed, given two edges $\cS$ and $\mathcal{T}$ in $\cH$, since $2\ell \le d$ and using the property of $R$, we can find an additional edge of $\cH$ and connect $\cS$ and $\mathcal{T}$ into an $\ell$-path of length $3$.
Secondly, we consider the last row of
Table~\ref{fig:table_applications}, where $\ell=k/2$ and $k/2 < d \le k-1$ with $k$ even.
The authors of~\cite{han2022minimum} prove an exact uncoloured minimum degree threshold, by distinguishing between an extremal and a non-extremal case. 
It is easy to see that any hypergraph $\cH$ with $\delta_d(\cH) \ge (\delta_{\cF, d}+\alpha) n^{k-d}$ is non-extremal, and thus we can use all lemmas from~\cite{han2022minimum} dealing with the
non-extremal case.

Statements in~\ref{thm:app_cycle} of Theorem~\ref{thm:applications} can now be proved using the same arguments as in the proof of Theorem~\ref{thm:posa_rb}.

\begin{table}[htbp]
	\begin{center}
	\resizebox{\textwidth}{!}{  
	\begin{tabular}{|c||c|c|c|c|} 
	    \hline
        Family $\cF$ & Reference for $\delta_{\cF,d}$ & Property $\ab$ & Property $\con$ & Property $\fac$ \\
        \hline
        \hline
        & Bu\ss, H\`an, & \multirow{2}{*}{Lemma $7$ in~\cite{vertex_loose_3unif}} & \multirow{2}{*}{Lemma $5$ in~\cite{vertex_loose_3unif}} & \multirow{11}{*}{Observation~\ref{obs:fac}}\\
        $1 < \ell < k/2$ & and Schacht~\cite{vertex_loose_3unif} & & & \\
        \cline{2-4}
        and $d=k-2$ & de Bastos, Mota, Schacht, & \multirow{2}{*}{Lemma $7$ in~\cite{de2017loose}} & \multirow{2}{*}{Lemma $5$ in~\cite{de2017loose}} &  \\
         & Schnitzer, and Schulenburg~\cite{de2017loose} & & & \\
        \cline{1-4}
        $1 \le \ell < k/2$ & \multirow{2}{*}{H\`an and Schacht~\cite{codegree_l_kunif}} & \multirow{2}{*}{Lemma $5$ in~\cite{codegree_l_kunif}} & \multirow{2}{*}{Lemma $6$ in~\cite{codegree_l_kunif}} & \\
        and $d=k-1$ & & & & \\
        \cline{1-4}
        $\ell = k-1$ & R\"{o}dl, Ruci\'{n}ski, & \multirow{2}{*}{Lemma $2.1$ in ~\cite{codegree_tight}} & \multirow{2}{*}{Lemma $2.4$ in ~\cite{codegree_tight}} & \\
        and $d=k-1$ & and Szemer\'{e}di~\cite{codegree_tight} & & &\\        \cline{1-4}
        $\ell=k/2$ & \multirow{3}{*}{H\`an, Han, and Zhao~\cite{han2022minimum}} & \multirow{3}{*}{Lemma $2.3$ in~\cite{han2022minimum}} & \multirow{3}{*}{Lemma $2.5$ in~\cite{han2022minimum}} & \\
        and $k/2 < d \le k-1$, & & & &\\
        with $k$ even & & & & \\
        \hline
	\end{tabular}	
	}
	\end{center}
	\caption{\label{fig:table_applications}References for the properties $\ab$, $\con$, and $\fac$ for the families in the statement~\ref{thm:app_cycle} of Theorem~\ref{thm:applications}.
	The first row is split into two, as~\cite{vertex_loose_3unif} deals with the case $k=3$ and~\cite{de2017loose} deals with the case $k \ge 4$.}
\end{table}
Of course, Table~\ref{fig:table_applications} is not an exhaustive list of all Dirac-type results proven via the absorption method, rather it is only a small sample.

\section{Concluding remarks}
\label{sec:concluding_remarks}

We remark that any transversal embedding problem for an $n$-vertex $m$-edge $k$-uniform hypergraph is equivalent to a non-rainbow embedding problem in a $(k+1)$-uniform $2$-partite hypergraph with parts of size $m$ and $n$. Indeed, each vertex in the class of size $m$ represents one of the colours and forms a $(k+1)$-edge together with each edge of this colour. This setup is somewhat unnatural and has not been studied explicitly, but this perspective was very helpful in many of the results for rainbow structures~\cite{cheng2021hamilton,Lu_codegree_rb-matching_tight,lu2022rainbow,rainbow_matching}. \par We now highlight further directions of research that we find to be of most interest.

\subsection{Vertex degree for tight Hamilton cycles}
\label{sec:open_problem_6.1}
Theorem \ref{thm:applications}\ref{thm:app_cycle} proves $d$-colour-blindness of the family of $k$-uniform Hamilton $\ell$-cycles, for various ranges of $d$, $k$, and $\ell$.
However, there is a well-known (uncoloured) Dirac-type result whose rainbow version is missing there: the vertex minimum degree for tight Hamilton cycles in $3$-uniform hypergraphs, corresponding to $d=1$, $k=3$ and $\ell=k-1$.

The proof of the minimum vertex degree threshold for this family is due to Reiher, Rödl, Ruciński, Schacht, and Szemerédi~\cite{reiher2019minimum}, and it uses the absorption method, making this family an ideal candidate for our main theorem.
However, it turns out that we cannot hope for the property \textbf{Con} to hold in this range of the parameters (see Section $2.1$ in~\cite{reiher2019minimum} for a discussion). Due to this additional complication, it would be an interesting challenge to obtain a transversal generalisation of the result in~\cite{reiher2019minimum}.

\subsection{Exact results and stability}

For Hamilton cycles in graphs, the exact rainbow minimum degree threshold is known~\cite{joos2020rainbow}, and the family of Hamilton cycles is \emph{exactly} colour-blind, meaning that an error-term as in Definition 1.2 is not required.
It is natural to ask whether exact results also hold for other structures in the rainbow setup and if a general statement similar to our Theorem~\ref{thm:main} can be proved.
For example, improving on the statement~\ref{thm:app_HC2} of Theorem~\ref{thm:applications}, it would be very interesting to show if $\delta(\fG) \ge rn/(r+1)$ is already sufficient for a transversal copy of the $r$-th power of a Hamilton cycle in graphs.
Note that already for a rainbow $K_r$-factor it is not known whether $\delta(\fG) \ge rn/(r+1)$ suffices.
Moreover, resolving this for the $r$-th power of a Hamilton cycles does not immediately imply the analogous result for a $K_r$-factors, even though the former contains the latter, because of the different number of colours needed for a rainbow embedding.
A similar observation is true for tight Hamilton cycles and perfect matchings in hypergraphs. We remark that Lu, Wang, and Yu~\cite{Lu_codegree_rb-matching_tight} showed that the family of $k$-uniform perfect matchings is \emph{exactly} $(k-1)$-colour blind, proving that the rainbow minimum co-degree threshold essentially is $n/2$. Improving on one of the statements in~\ref{thm:app_cycle} of Theorem~\ref{thm:applications}, we can ask if the same condition $\delta_{k-1}(\fH)\ge n/2$ is sufficient for a transversal copy of a tight Hamilton cycle. 

In the non-rainbow setup, exact results can typically be obtained by considering an extremal and non-extremal case separately, where the latter often gives stability, i.e.~even a smaller minimum degree condition is sufficient if the graph is far from any extremal construction.
For graphs there are also more ad-hoc arguments that also work in the rainbow setup, e.g.~\cite{joos2020rainbow}.
For perfect matchings in collections of $k$-uniform hypergraphs, Lu, Wang, and Yu~\cite{Lu_codegree_rb-matching_tight} transform the problem into a matching problem in a $(k+1)$-uniform $2$-partite hypergraph as explained above.
Their arguments uses absorbers and distinguishes between an extremal and non-extremal case.
Roughly speaking, they say that a hypergraph collection is extremal if essentially all of them are close to one of the extremal graphs for the uncoloured problem.
Working with a similar notion for an extremal collection, it would be interesting to prove an exact version for any of our results, as many of them hold in the uncoloured setup, e.g.~\cite{bastos2018loose,han2022minimum,han2015minimum2,han2015minimum,square_Ham_cycle,rodl2011dirac}.

It seems to be too much to hope for a general theorem that covers all of these applications, because of the different extremal constructions in each case.
But we remark that, besides the properties \textbf{Ab}, \textbf{Con}, and \textbf{Fac}, the additional $(\alpha n^{k-d})$-term for the minimum $d$-degree in our theorem is only necessary for the two applications of Lemma~\ref{lem:rainbow_paths_tiling}.
Therefore, a major step would be a variant of this theorem (for specific $\cA$) which is applicable with a lower minimum $d$-degree, under the assumption that the hypergraph collection is not extremal.
However, Hamilton cycles give new complications and in this setup it is harder to make a direct use of the results from the non-rainbow case, which was the main scope of this paper.

\subsection{Other potential applications}
There are many more structures which can be represented as $\mathcal{A}$-cycles, e.g.~copies of $C_4$ glued as depicted in Figure~\ref{fig:pillar}. In the case of graphs, any $\mathcal{A}$-link forms an $\mathcal{A}$-cycle with bounded maximum degree and bounded bandwidth. Thus, the bandwidth theorem by Böttcher, Schacht and Taraz~\cite{bandwidthThm} immediately gives minimum degree thresholds for the existence of a Hamilton $\mathcal{A}$-cycle, but their proof relies on different techniques than we require for the application of Theorem~\ref{thm:main}. Hence, one would need to prove properties \textbf{Ab}, \textbf{Con}, and \textbf{Fac} for such structures in order to obtain the corresponding rainbow result using our method.
More generally, a rainbow version of the bandwidth theorem would be very interesting.
Note that the bandwidth theorem is not optimal for many graphs, so the minimum degree conditions for the containment of Hamilton $\mathcal{A}$-cycles is an interesting problem even in the non-rainbow setup, and is related to embedding factors and the critical chromatic number~\cite{kuhnosthus}.

\begin{remark}
    After the manuscript became available online,  the problem in Section~\ref{sec:open_problem_6.1} was solved by Tang, Wang, Wang, and Yan~\cite{tang2023rainbow}, and a rainbow bandwidth theorem for graph transversal was proved by  Chakraborti, Im, Kim, and Liu~\cite{chakraborti2023bandwidth}.
\end{remark}

\section*{Acknowledgements}
We thank Julia B\"ottcher for suggesting us to investigate if the family of the squares of Hamilton cycles is colour-blind and Richard Lang for bringing~\cite{pavez2021towards} to our attention. We thank Richard Montgomery for valuable comments regarding the presentation of the paper. We also thank Anusch Taraz and Dennis Clemens for organising an online workshop in March 2021 where this project was initiated.

\bibliographystyle{plain}
\bibliography{references}

\end{document}